\newtheorem{theorem}{Theorem}
\newtheorem{corollary}[theorem]{Corollary}
\newtheorem{theoremn}{Theorem}[section] 
\newtheorem{lemman}[theoremn]{Lemma}
\newtheorem*{theorem*}{Theorem}
\newtheorem*{lemma*}{Lemma}
\theoremstyle{definition}
\newtheorem{definition}[theoremn]{Definition}
\newtheorem{remark}[theoremn]{Remark}
\newtheoremstyle{definition*}
{\topsep}
{\topsep}
{}
{0pt}
{\bfseries}
{.}
{ }
{\thmname{#1}\thmnumber{ #2}\thmnote{ (#3)}}
\theoremstyle{definition*}
\newtheorem*{definition*}{Definition}
\newtheorem*{remark*}{Remark}
\newtheorem*{claim*}{Claim}
\newtheorem*{conclusion*}{Conclusion}
\newcommand{\Qdp}{\text{Qd}(p)}
\begin{document}
	\baselineskip 13.75pt
	
	\title[ ]{An extension of the Glauberman ZJ-Theorem }

	\author{{M.Yas\.{I}r} K{\i}zmaz }
	\address{Department of Mathematics, Bilkent University, 06800 
		Bilkent, Ankara, Turkey}
	
	\email{yasirkizmaz@bilkent.edu.tr}
	\subjclass[2010]{20D10, 20D20}
	\keywords{controlling fusion, ZJ-theorem,  $p$-stable groups}
			\maketitle
		\begin{abstract}
			Let $p$ be an odd prime and let $J_o(X)$, $J_r(X)$ and $J_e(X)$ denote the three different versions of Thompson subgroups for a $p$-group $X$.
			In this article, we first prove an extension of Glauberman's replacement theorem  (\cite[Theorem 4.1]{Gla}).  Secondly, we prove the following: Let $G$ be a $p$-stable group and $P\in Syl_p(G)$. Suppose that $C_G(O_{p}(G))\leq O_{p}(G)$. If $D$ is a strongly closed subgroup in $P$, then $Z(J_o(D))$, $\Omega(Z(J_r(D)))$ and $\Omega(Z(J_e(D)))$ are normal subgroups of $G$. Thirdly, we show the following: Let $G$ be a $\Qdp$-free group and $P\in Syl_p(G)$. If $D$ is a strongly closed subgroup in $P$, then the normalizers of the subgroups $Z(J_o(D))$, $\Omega(Z(J_r(D)))$ and $\Omega(Z(J_e(D)))$ control strong $G$-fusion in $P$. We also prove a similar result for a $p$-stable and $p$-constrained group. Lastly, we give a $p$-nilpotency criteria, which is an extension of Glauberman-Thompson $p$-nilpotency theorem.
		\end{abstract}
		
		\section{Introduction}
		Throughout the article, all groups considered are finite.
		Let $P$ be a $p$-group. For each abelian subgroup $A$ of $P$, let $m(A)$ be the rank of $A$, and let $d_r(P)$ be the maximum of the numbers $m(A)$. Similarly, $d_o(P)$ is defined to be the maximum of orders of abelian subgroups of $P$ and $d_e(P)$ is defined to be the maximum of orders of elementary abelian subgroups of $P$. Define $$\mathcal A_r(P)=\{A\leq P \mid A \textit{ is abelian} \textit{ and } m(A)=d_r(P) \}, $$
		$$\mathcal A_o(P)=\{A\leq P \mid A \textit{ is abelian} \textit{ and } |A|=d_o(P) \}$$
		and
		$$\mathcal A_e(P)=\{A\leq P \mid A \textit{ is elementary abelian} \textit{ and } |A|=d_e(P) \}.$$
		Now we are ready to define three different versions of Thompson subgroup:
		$J_r(P)$, $J_o(P)$ and $J_e(P)$ are subgroups of $P$ generated by all members of $\mathcal A_r(P), \mathcal A_o(P)$ and $\mathcal A_e(P)$, respectively.
		
		Thompson  proved his normal complement theorem according to $J_r(P)$ in \cite{Thmp}, which states that ``if $N_G(J_r(P))$ and $C_G(Z(P))$ are both $p$-nilpotent and $p$ is odd then $G$ is $p$-nilpotent". Later Thompson introduced ``a replacement theorem" and a subgroup similar to $J_o(P)$ in \cite{Thmp2}. Due to the compatibility of the replacement theorem with $J_o(P)$, Glauberman worked with $J_o(P)$, indeed, he extended the replacement theorem of Thompson for odd primes (see \cite[Theorem 4.1]{Gla}). We should note that Glauberman's replacement theorem is one of the important ingredients of the proof of his ZJ-theorem.\\\\
		\textbf{Theorem (Glauberman).} Let $p$ be an odd prime, $G$ be a $p$-stable group, and $P\in Syl_p(G)$. Suppose that $C_G(O_{p}(G))\leq O_{p}(G)$. Then $Z(J_o(P))$
		is a characteristic subgroup of $G$.\\\\
		There are many important consequences of his theorem. One of the striking ones is that $N_G(Z(J_o(P)))$ controls strong $G$-fusion in $P$ when $G$ does not involve a subquotient isomorphic to $Q_d(p)$ (see \cite[Theorem B]{Gla}). Another consequence of his theorem is an improvement of Thompson normal complement theorem. This result says that  if $N_G(Z(J_o(P))$ is $p$-nilpotent and $p$ is odd then $G$ is $p$-nilpotent. 
		
		There is still active research on properties of Thompson's subgroups. A current article \cite{Pt} is describing algorithms for determining $J_e(P)$ and $J_o(P)$. We also refer to \cite{Pt} and \cite{Kho} for more extensive discussions about literature and replacement theorems, which we do not state here. It deserves to be mentioned separately that Glauberman obtained remarkably more general versions of the Thompson replacement theorem  in his later works (see \cite{Gla4} and \cite{Gla3}). We should also note that even if \cite[Theorem 1]{Pt} is attributed to Thompson replacement theorem \cite{Thmp} in \cite{Pt}, it seems that the correct reference is Isaacs replacement theorem (see \cite{Isc2}).
		
		In \cite{Crv}, the ZJ-theorem is given according to $J_e(P)$ (see \cite[Theorem 1.21, Definition 1.16]{Crv}). Although it might be natural to think that Glauberman ZJ-theorem is also correct for ``$J_e(P)$ and $J_r(P)$", there is no reference verifying that. We should also mention that Isaacs proved the Thompson normal complement theorem according to $J_e(P)$ in his book (see \cite[Chapter 7]{Isc}). However, the ZJ-theorem is not contained in his book.
		
		One of the purposes of this article is to generalize Glauberman replacement theorem (see \cite[Theorem 4.1]{Gla}), which was used by Glauberman in the proof of his ZJ-theorem. We also note that our replacement theorem is an extension of Isaacs replacement theorem (see \cite{Isc2}) when we consider odd primes. The following is the first main theorem of our article:
		\begin{theorem}\label{A}
			Let $G$ be a $p$-group for an odd prime $p$ and $A\leq G$ be abelian. Suppose that $B\leq G$ is of class at most $2$ such that $B'\leq A$, $A\leq N_G(B)$ and $B\nleq N_G(A)$. Then there exists an abelian subgroup $A^*$ of $G$ such that
			\begin{enumerate}[label=(\alph*)]
				\item\textit{ $|A|=|A^*|,$}
				\item  \textit{$A\cap B <A^*\cap B,$}
				\item \textit{$A^*\leq N_G(A)\cap A^G,$}
				\item \textit{ the exponent of $A^*$ divides the exponent of $A$. Moreover, $rank(A)\leq rank(A^*)$.}
			\end{enumerate}
		\end{theorem}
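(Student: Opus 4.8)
The plan is to run the argument of Glauberman's proof of \cite[Theorem~4.1]{Gla} (equivalently, Isaacs' replacement theorem specialised to odd $p$), organised as an induction on $|B|$, and to carry along the extra data needed for (c) and (d). Write $H=\langle A,B\rangle$ and $D=A\cap B$. One starts with the formal reductions: since $A$ is abelian and $B'\le A$ we get $B'\le D$, hence $D\trianglelefteq B$, $D\trianglelefteq A$, so $D\trianglelefteq H$, $[A,D]=1$, and $[B,A]\trianglelefteq H$ with $[B,A]\le B$. Two observations provide the room in which $A^{*}$ will be built. First, $B\nleq A$ --- otherwise $B'\le A\le B$ would force $A\trianglelefteq B$, against $B\nleq N_G(A)$ --- so $|D|\le|A|/p$. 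Second, $N_B(A)>D$: as $B'\le D$ the group $B/D$ is abelian, $A$ acts on it, and a nontrivial fixed point $\overline b\in C_{B/D}(A)$ gives $b\in B\setminus D$ with $[b,A]\le D\le A$, i.e.\ $b\in N_G(A)$.

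Next one reduces to a quadratic action. If $[B,A,A]\nleq A$, then $B_1:=[B,A]D$ is a subgroup of $B$ of class $\le 2$ with $B_1'\le B'\le A$, $A\le N_G(B_1)$ and $B_1\nleq N_G(A)$; moreover $B_1=B$ would give $B/D=[B/D,A]=1$, i.e.\ $B=D\le A$, which is excluded, so $|B_1|<|B|$ and the inductive hypothesis applied to $(G,A,B_1)$ already produces the desired $A^{*}$ (note $A\cap B_1=D=A\cap B$, so $D<A^{*}\cap B_1\le A^{*}\cap B$). Hence we may assume $[B,A,A]\le D$. Put $W=[B,A]D$. Then $W\le B$ is $A$-- and $B$--invariant, $W'\le B'\le D$ so $W$ has class $\le2$, $[W,A]=[B,A,A]\le D\le A$ so $W$ normalizes $A$, and $W\le A^{G}$ because $[B,A]=\langle a^{-1}a^{b}:a\in A,\,b\in B\rangle$ with each $a^{b}$ lying in the conjugate $A^{b}\le A^{G}$. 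The first substantive claim is that $W$ is abelian, and this is where $p$ being odd is indispensable: for $p$ odd the $p$-th power map is an endomorphism of the class-$\le 2$ group $W$ with no $2$-torsion obstruction, so that together with the quadratic action of $A$ the quaternion-type configuration that breaks the case $p=2$ cannot arise. Granting this, set $A^{*}=C_A(W)\,W$; it is an abelian group (a central product of abelian subgroups, and genuinely a subgroup since $C_A(W)$ is $W$-invariant), it lies in $N_G(A)\cap A^{G}$, and by Dedekind $A^{*}\cap B=(C_A(W)\cap B)\,W=DW=W>D$, giving (b) and (c).

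It remains to control orders and establish (d). From $A^{*}=C_A(W)W$ one has $|A^{*}|=|C_A(W)|\,|W|/|C_W(A)|$, and the relation $[W,A,A]=1$ furnishes a nondegenerate bi-additive pairing $A/C_A(W)\times W/C_W(A)\to[W,A]\le D$, which forces $|W:C_W(A)|\ge|A:C_A(W)|$, hence $|A^{*}|\ge|A|$; if the inequality is strict one trims $A^{*}$ down to an abelian subgroup of order exactly $|A|$ that still contains $DW$ (there is room because $|D|\le|A|/p$) and still satisfies (c) and the rank bound, the trimming being chosen to retain the elementary abelian part. For the exponent, since $C_A(W)\le A$ it suffices to bound $\exp[B,A]$; regarding $W$ as a $\ZZ[\langle a\rangle]$-module and using $(a-1)^{2}W=0$, the identity $[a^{e},b]=1$ with $e=\exp(A)$ becomes $\bigl(e+\binom{e}{2}(a-1)\bigr)c=0$ in $W$ for the generator $c=a^{-1}a^{b}$, and since $p$ is odd $e\mid\binom{e}{2}$ while $(a-1)c\in[W,A]\le A$ has order dividing $e$, whence $e\cdot c=0$. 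Finally, the rank bound follows from the same pairing: modulo $C_W(A)$ the group $A^{*}$ is the internal direct product $\bigl(C_A(W)/C_W(A)\bigr)\times\bigl(W/C_W(A)\bigr)$, so $\operatorname{rank}(A^{*})\ge\operatorname{rank}\bigl(C_A(W)/C_W(A)\bigr)+\operatorname{rank}\bigl(W/C_W(A)\bigr)$, and a dominance argument using nondegeneracy shows the right side is at least $\operatorname{rank}(A)$. This yields an $A^{*}$ with all four properties and closes the induction.

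The hard part, and the genuinely new ingredient beyond the classical replacement theorems, is the last paragraph: Glauberman's and Isaacs' arguments are satisfied with $|A^{*}|\ge|A|$ and say nothing about rank, so here one must simultaneously secure the exact order, arrange that the replacing subgroup $W\le B$ carries at least as much elementary abelian content as $A$ loses in passing to $C_A(W)$, and verify that the final trimming to order $|A|$ destroys neither the normalizer/normal-closure condition (c) nor the rank inequality --- these demands pull against one another. A second, genuinely essential difficulty --- the point at which the hypothesis ``$p$ odd'' is actually consumed --- is the proof that $W$ is abelian; once that is isolated, the rest of the improvement step is bookkeeping.
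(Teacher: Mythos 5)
There are two genuine gaps here, and each sits at a load-bearing point of your argument.

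First, the assertion that $W=[B,A](A\cap B)$ is abelian is exactly the step you skip (``Granting this\dots''), and it is strictly stronger than what the available machinery gives. Glauberman's lemma (Lemma~\ref{Glb} of the paper) shows, for odd $p$ and under $[B,A,A,A]=1$ with $B'\le Z(BA)$, that $[b,A]$ is abelian for each \emph{single} $b\in B$; the symmetry trick there ($[[b,x],[b,y]]=[[b,x,y],b]$, an element equal to its own inverse) uses the same $b$ on both sides and says nothing about whether $[b,A]$ and $[c,A]$ commute for $b\ne c$. So even in the reduced quadratic case you cannot work with all of $[B,A]$ at once; you must isolate a single conjugating element. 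The paper does precisely this: it inducts on $|G|$ (not $|B|$) to reduce to the case where $M=N_G(A)$ is maximal in $G=AB$, picks one $b\in B\setminus M$, and builds $A^*=(H\cap B)Z$ with $H=AA^b$ and $Z=A\cap A^b$, so that only the abelianness of $[A,b]$ for that one $b$ is ever needed.

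Second, the order and rank control. You deduce $|W:C_W(A)|\ge|A:C_A(W)|$ ``by nondegeneracy'' of the commutator pairing. A nondegenerate bi-additive pairing of finite abelian $p$-groups into a third abelian $p$-group forces no inequality between the orders of the two sides (it only yields embeddings into $\mathrm{Hom}$-groups, which can be strictly larger than the groups themselves); in the classical Thompson/Glauberman replacement theorems the inequality $|A^*|\ge|A|$ is extracted from the hypothesis that $A$ has \emph{maximal order} among abelian subgroups --- precisely the hypothesis Theorem A is designed to remove. Without it your pairing argument gives nothing, and the subsequent ``trimming'' of $A^*$ down to order exactly $|A|$ while simultaneously retaining $DW$, the exponent bound, the rank bound and membership in $N_G(A)\cap A^G$ is neither carried out nor routine. (There is also a slip in the order formula: $|C_A(W)W|=|C_A(W)|\,|W|/|C_A(W)\cap W|$, and if $W$ is abelian then $C_A(W)\cap W=A\cap W=A\cap B$, which need not equal $C_W(A)$.) The paper's construction sidesteps all of this: the exact equality $|A^*|=|A|$ falls out of $|AA^b|/|A^b|=|A|/|Z|=|H\cap B|/|A\cap B|$ with no counting of centralizers, and the rank claim in (d) is obtained by running the identical construction on the largest elementary abelian subgroup $E\le A$ and invoking $\mathrm{rank}(E)=\mathrm{rank}(A)$. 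I would abandon the $C_A(W)W$ template and follow that route.
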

		One of the main differences from \cite[Theorem 4.1]{Gla} is that we are not taking $A$ to be of maximal order. By removing the order condition, we obtain more flexibility to apply the replacement theorem. Since our replacement theorem is easily applicable to all versions of Thompson subgroups and there is a gap in the literature whether ZJ-theorem holds for other versions of Thompson subgroups, we shall prove our extensions of ZJ-theorem for all different versions of Thompson subgroups.
		\vspace{0.15cm}
		 \begin{definition*}\cite[pg 22]{Gla2}\label{def:p-stable}
			A group $G$ is called \textbf{$p$-stable} if it satisfies the following condition: Whenever $P$ is a $p$-subgroup 
			of $G$, $g\in N_G(P)$ and $[P,g,g]=1$ then the coset $gC_G(P)$ lies in $O_p(N_G(P)/C_G(P))$. 
		\end{definition*}
	\vspace{0.15cm}
		Let $K$ be a $p$-group. We write  $\Omega(K)$ to denote the subgroup $\langle \{x\in K\mid x^{p}=1 \}\rangle$ of $K$. Note that $\Qdp$ is defined to be a semidirect product of $\mathbb Z_p \times \mathbb Z_p$ with $SL(2,p)$ by the natural action of $SL(2,p)$ on $\mathbb Z_p \times \mathbb Z_p$. Here is the second main theorem of the article;
		\begin{theorem}\label{B}
			Let $p$ be an odd prime, $G$ be a $p$-stable group, and $P\in Syl_p(G)$. Suppose that $C_G(O_{p}(G))\leq O_{p}(G)$. If $D$ is a strongly closed subgroup in $P$ then $Z(J_o(D)), \ \Omega(Z(J_r(D)))$ and $\Omega(Z(J_e(D)))$ are normal subgroups of $G$.
		\end{theorem}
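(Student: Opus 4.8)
\emph{Proof idea.} We induct on $|G|$; put $Q:=O_p(G)$, and note that $C_G(Q)\le Q$ already forces $O_{p'}(G)=1$. First reduce to the case $G=O^{p'}(G)$. Let $W$ denote any one of $Z(J_o(D))$, $\Omega(Z(J_r(D)))$, $\Omega(Z(J_e(D)))$; it is characteristic in $D$. If $g\in N_G(P)$ then $D^g\cap P\le D$ by strong closure, so $D^g=D$, and hence $N_G(P)\le N_G(D)\le N_G(W)$. Since $P\in Syl_p(O^{p'}(G))$ and $O^{p'}(G)\trianglelefteq G$, the Frattini argument gives $G=O^{p'}(G)\,N_G(P)$. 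If $O^{p'}(G)<G$, then $O^{p'}(G)$ is $p$-stable, $O_p(O^{p'}(G))=Q$ with $C_{O^{p'}(G)}(Q)\le Q$, and $D$ is strongly closed in $P$ with respect to $O^{p'}(G)$; by induction $W\trianglelefteq O^{p'}(G)$, and combined with $N_G(P)\le N_G(W)$ and the Frattini factorisation this gives $W\trianglelefteq G$. So we may assume $G=O^{p'}(G)$.

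Next, two consequences of strong closure. For $x\in Q\cap D$ and $g\in G$ we have $x^g\in Q^g=Q\le P$, whence $x^g\in D$; thus $Q_0:=Q\cap D\trianglelefteq G$. Also, for any $A\le D$ one has $A^G\cap P\le D$ (the normal closure of a subgroup of $D$ meets $P$ inside $D$, because $D$ is strongly closed). I claim it now suffices to prove
\[
J_o(D),\ J_r(D),\ J_e(D)\ \le\ Q.
\]
Indeed, granting this, $J_o(D)\le Q\cap D=Q_0$; since $J_o(D)$ already contains every abelian subgroup of $D$ of order $d_o(D)$, these all lie in $Q_0$, so $d_o(Q_0)=d_o(D)$, $\mathcal A_o(Q_0)=\mathcal A_o(D)$, and $J_o(Q_0)=J_o(D)$. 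Hence $Z(J_o(D))=Z(J_o(Q_0))$ is characteristic in $Q_0\trianglelefteq G$, so normal in $G$; replacing $d_o$ by $d_r$ and by $d_e$ gives likewise $\Omega(Z(J_r(D)))=\Omega(Z(J_r(Q_0)))$ and $\Omega(Z(J_e(D)))=\Omega(Z(J_e(Q_0)))$, again characteristic in $Q_0\trianglelefteq G$.

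It remains to prove the displayed claim. Fix one of the three functors and write $\mathcal A(\cdot)$ for the corresponding family of abelian subgroups. Suppose some member of $\mathcal A(D)$ is not contained in $Q$, and choose $A\in\mathcal A(D)$ with $A\nleq Q$ and $|A\cap Q|$ maximal. Since $p$ is odd and $G$ is $p$-stable with $C_G(Q)\le Q$, we have $Q\nleq N_G(A)$: otherwise $[Q,A]\le Q\cap A\le A$, so $A$ acts quadratically on $Q$, and $p$-stability forces $A\le QC_G(Q)=Q$. One then constructs a subgroup $B$ with $A\cap Q\le B\le Q$, $A\le N_G(B)$, $B\nleq N_G(A)$, $B$ of nilpotency class at most $2$, and $B'\le A$ (obtained by descending in $Q$ along the action of $A$; oddness of $p$ is used here), chosen moreover so that $A\nleq Q$ survives the replacement below. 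Applying Theorem~\ref{A} to $A$ and $B$ produces an abelian $A^{*}$ with $|A^{*}|=|A|$, $rank(A^{*})\ge rank(A)$, exponent of $A^{*}$ dividing that of $A$, $A\cap B<A^{*}\cap B$, and $A^{*}\le N_G(A)\cap A^{G}$. The product $A^{*}A$ is a $p$-group, so some $G$-conjugate $(A^{*}A)^{g}$ lies in $P$; then $(A^{*})^{g}\le A^{G}\cap P\le D$, and the three extra conclusions of Theorem~\ref{A} guarantee that $(A^{*})^{g}\in\mathcal A(D)$ for whichever functor we are treating (order for $J_o$, rank for $J_r$, exponent $p$ together with order for $J_e$). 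Finally $B\le Q$ and $A\cap Q\le B$ force $A\cap Q=A\cap B<A^{*}\cap B\le A^{*}\cap Q$, so that $|(A^{*})^{g}\cap Q|=|A^{*}\cap Q|>|A\cap Q|$ while $(A^{*})^{g}\nleq Q$; this contradicts the choice of $A$ and proves the claim.

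The main obstacle is the third paragraph: constructing the subgroup $B$ of class at most $2$ with $B'\le A$ from the action of $A$ on $Q$, and checking that the abelian subgroup produced by Theorem~\ref{A} still escapes $Q$ so that the extremal choice is genuinely contradicted. This is the combinatorial core of Glauberman's original proof of the $ZJ$-theorem, which here must be carried out \emph{without} assuming $A$ of maximal order in a Sylow subgroup (so that it can be run inside the strongly closed subgroup $D$), and simultaneously for orders, ranks and exponents so as to cover $J_o$, $J_r$ and $J_e$ at once — which is precisely why the strengthened replacement theorem (Theorem~\ref{A}), with its conclusions on rank and exponent, is needed in place of \cite[Theorem 4.1]{Gla}.
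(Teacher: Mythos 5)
Your framing steps are fine: $Q_0=O_p(G)\cap D\trianglelefteq G$ by strong closure, the reduction ``it suffices to show $J_x(D)\le O_p(G)$'' is logically valid, and your quadratic-action argument showing $Q\nleq N_G(A)$ for $A\nleq Q$ is essentially the paper's Lemma~3.2. But the entire content of the theorem is concentrated in your third paragraph, and there it is asserted rather than proved. Two concrete gaps. First, the subgroup $B$ with $A\cap Q\le B\le Q$, $A\le N_G(B)$, $B\nleq N_G(A)$, $B$ of class at most $2$ and $B'\le A$ is not constructed, and ``descending in $Q$ along the action of $A$'' does not obviously produce one: there is no reason for an $A$-invariant subgroup of $O_p(G)$ to have its derived subgroup inside $A$. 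In Glauberman's argument (and in the paper's Theorem~3.6) the inclusion $B'\le A$ for \emph{all} $A\in\mathcal A_x$ is obtained globally, from a minimal normal $p$-subgroup $B$ of $G$ violating ``$Z_x\cap B\trianglelefteq G$'': one shows $B=(Z_x\cap B)^G$, deduces $B'\le Z_x$, and uses $Z_x\le A$ for every $A\in\mathcal A_x(K)$. That $B$ is a normal subgroup of $G$ built from the statement being proved, not something extracted from the action of a single offending $A$ on $O_p(G)$. Second, your extremal argument does not close: Theorem~A gives $A^*\le N_G(A)\cap A^G$ with $|A^*\cap B|>|A\cap B|$, but nothing forces $A^*\nleq Q$; if $A^*\le Q$ then the maximality of $|A\cap Q|$ subject to $A\nleq Q$ is simply not contradicted. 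Your parenthetical ``chosen moreover so that $A\nleq Q$ survives the replacement'' is exactly the missing step. (The paper's extremal choice is arranged differently so that both outcomes bite: among members of $\mathcal A_x(K)$ \emph{not} contained in a certain subgroup $K^*$ one maximizes $|A_1\cap B|$, so the replacement forces $A^*\le K^*$, and then $B\le\Omega(Z(J(K^*)))\le A^*\le N_P(A_1)$ gives the contradiction.)

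Beyond these gaps, note that your route would prove $J_x(D)\le O_p(G)$, hence that $J_x(D)$ itself (not merely its center) is normal in $G$ — a statement strictly stronger than the theorem, not established anywhere in the literature cited, and quite possibly false under these hypotheses. The paper deliberately avoids it: Theorem~B is deduced from Theorem~3.6, which proves $Z_x\cap B\trianglelefteq G$ for every normal $p$-subgroup $B$ of $G$ by induction on $|B|$, using the largest normal subgroup $N$ of $G$ normalizing $Z_x\cap B$, the Frattini argument for $K^*=\langle D\cap N\rangle$, and $p$-stability applied modulo $C_G(B)$ rather than modulo $C_G(O_p(G))$. I would recommend reworking your argument along those lines; as it stands, the proposal is an outline whose core is the very point at issue.
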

		We prove Theorem B  mainly by following the original proof given by Glauberman and with the help of Theorem A. When we take $D=P$, we obtain that $Z(J_o(P)),\Omega(Z(J_r(P)))$ and $\Omega(Z(J_e(P)))$ are characteristic subgroups of $G$ under the hypothesis of Theorem B. Both $Z(J_r(P))$ and $Z(J_e(P))$ need an extra operation  ``$\Omega$" and it does not seem quite possible to remove ``$\Omega$" by the method used here.
		\begin{corollary}\label{C}
Let $p$ be an odd prime, $G$ be a $p$-stable group, and $P\in Syl_p(G)$. Suppose that $C_G(O_{p}(G))\leq O_{p}(G)$ and $D$ is a strongly closed subgroup in $P$. If the exponent of $\Omega(D)$ is $p$, then $Z(J_o(\Omega(D))), \ Z(J_r(\Omega(D)))$ and $Z(J_e(\Omega(D)))$ are normal subgroups of $G$.
		\end{corollary}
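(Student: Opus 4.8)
The plan is to reduce Corollary \ref{C} directly to Theorem \ref{B}, applied to the subgroup $\Omega(D)$ in place of $D$. The first step is to verify that $\Omega(D)$ is itself strongly closed in $P$ with respect to $G$. This is where the hypothesis that $\Omega(D)$ has exponent $p$ is used: if $x\in\Omega(D)$ then $x^p=1$, so for any $g\in G$ with $x^g\in P$ we get $x^g\in D$ by strong closure of $D$, and since $(x^g)^p=(x^p)^g=1$ the element $x^g$ has order dividing $p$ and hence lies in $\Omega(D)=\langle\{y\in D\mid y^p=1\}\rangle$ by definition. (Without the exponent hypothesis this inheritance can fail, which is precisely why the hypothesis is imposed.)

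The second step is to invoke Theorem \ref{B} with $\Omega(D)$ in the role of $D$; all the other hypotheses ($G$ is $p$-stable, $C_G(O_p(G))\le O_p(G)$, $P\in Syl_p(G)$) are unchanged. This gives at once that $Z(J_o(\Omega(D)))$, $\Omega(Z(J_r(\Omega(D))))$ and $\Omega(Z(J_e(\Omega(D))))$ are normal subgroups of $G$. The final step is to delete the outer $\Omega$'s: since $J_r(\Omega(D))$ and $J_e(\Omega(D))$ are subgroups of $\Omega(D)$, they have exponent dividing $p$, hence so do their centres, and therefore $\Omega(Z(J_r(\Omega(D))))=Z(J_r(\Omega(D)))$ and $\Omega(Z(J_e(\Omega(D))))=Z(J_e(\Omega(D)))$. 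The asserted normality follows. (One may also note that when $\Omega(D)$ has exponent $p$ every abelian subgroup of it is elementary abelian of rank equal to its order's exponent, so $\mathcal{A}_r(\Omega(D))=\mathcal{A}_o(\Omega(D))=\mathcal{A}_e(\Omega(D))$ and the three Thompson subgroups, hence the three normal subgroups in question, actually coincide.)

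There is no real obstacle in this argument: the only point that needs any care is the verification in the first step that $\Omega(D)$ inherits strong closure from $D$, and that becomes immediate once the exponent-$p$ hypothesis is used to identify $\Omega(D)$ with the set of elements of $D$ of order dividing $p$. Everything past that is a direct application of Theorem \ref{B} together with the trivial observation that $\Omega$ acts as the identity on $p$-groups of exponent $p$.
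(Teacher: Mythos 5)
Your proposal is correct and follows essentially the same route as the paper: verify that the exponent-$p$ hypothesis makes $\Omega(D)$ strongly closed in $P$, then apply Theorem~\ref{B} to $\Omega(D)$. The paper concludes by noting $J_o(\Omega(D))=J_r(\Omega(D))=J_e(\Omega(D))$ (a fact you also record parenthetically), whereas you instead strip the outer $\Omega$'s using that everything lives in a group of exponent $p$; both finishes are immediate and equivalent.
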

	
		\begin{proof}[\textbf{Proof}]
			Suppose that the exponent of $\Omega(D)$ is $p$. Let $U\leq \Omega(D)$ and $U^g\leq P$ for some $g\in G$. Then we see that $U^g \leq D$ as $D$ is strongly closed in $P$. Since the exponent of $U$ is $p$, we get that $U^g\leq \Omega(D)$. Thus $\Omega(D)$ is strongly closed in $P$, and so $Z(J_o(\Omega(D)))\lhd G$ by Theorem B. On the other hand, $J_o(\Omega(D))=J_e(\Omega(D))=J_r(\Omega(D))$ since the exponent of $\Omega(D)$ is $p$. Then the result follows.
		\end{proof}
		
		Note that the condition on the exponent of $\Omega(D)$ is naturally satisfied if $\Omega(D)$ is a regular $p$-group and it is well known that $p$-groups of class at most $p-1$ are regular. Thus, we may apply Corollary C when $|\Omega(D)|\leq p^p$, in particular. One of the advantages of working with $\Omega(D)$ is that $J_x(\Omega(D))$ could be determined more easily  compared to $J_x(D)$ for most of the $p$-groups for $x\in \{o,r,e\}$.
	\vspace{0.1cm}
	\begin{definition*}\cite[pg 268]{Gor}
	A group $G$ is called \textbf{	$p$-constrained} if $C_G(U)\leq O_{p',p}(G)$ for a Sylow $p$-subgroup $U$ of $O_{p',p}(G)$.
\end{definition*}

\begin{theorem}\label{E}
	Let $p$ be an odd prime, $G$ be a $p$-stable group, and $P\in Syl_p(G)$. Assume that $N_G(U)$ is $p$-constrained for each nontrivial subgroup $U$ of $P$. If $D$ is a strongly closed subgroup in $P$ then the normalizers of the subgroups $Z(J_o(D))$, $\Omega(Z(J_r(D)))$ and $\Omega(Z(J_e(D)))$ control strong $G$-fusion in $P$.
\end{theorem}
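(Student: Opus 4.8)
The plan is to reduce, via Alperin's fusion theorem, to the $p$-local subgroups of $G$; to apply Theorem B to each of them after factoring out its $p'$-core; and then to reassemble, exactly as in the passage from Glauberman's $ZJ$-theorem to \cite[Theorem B]{Gla}. Throughout, let $W$ denote any one of the three assignments $S\mapsto Z(J_o(S))$, $S\mapsto\Omega(Z(J_r(S)))$, $S\mapsto\Omega(Z(J_e(S)))$ on finite $p$-groups; each attaches to a $p$-group a characteristic subgroup and commutes with group isomorphisms, and the three cases of Theorem E are treated identically. We may assume $D\neq 1$. Since $D$ is strongly closed in $P$ it is normal in $P$, hence $W(D)$ is characteristic in $D$ and $P\le N_G(D)\le N_G(W(D))$, so the statement makes sense.

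The first step is the local input. Let $U$ be a nontrivial subgroup of $P$ with $S:=N_P(U)\in\mathrm{Syl}_p(N_G(U))$ -- these are the subgroups occurring in a conjugation family -- and put $H=N_G(U)$, so that $S=P\cap H$. It is standard that subgroups and quotients of a $p$-stable group are $p$-stable, so $H$ and $\bar H:=H/O_{p'}(H)$ are $p$-stable; by hypothesis $H$ is $p$-constrained, so $C_{\bar H}(O_p(\bar H))\le O_p(\bar H)$, and the image $\bar S$ of $S$ lies in $\mathrm{Syl}_p(\bar H)$. Moreover $D\cap S=D\cap H$ is strongly closed in $S$ with respect to $H$, and -- using $O_{p'}(H)\cap S=1$, that $D\cap S\trianglelefteq S$, and Sylow's theorem inside $S\,O_{p'}(H)$ -- its image $\overline{D\cap S}$ is strongly closed in $\bar S$ with respect to $\bar H$. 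Theorem B applied to $\bar H$ now gives $W(\overline{D\cap S})\trianglelefteq\bar H$, equivalently $W(D\cap S)\,O_{p'}(H)\trianglelefteq H$; and since $W(D\cap S)$ is a Sylow $p$-subgroup of $W(D\cap S)\,O_{p'}(H)$, a Frattini argument yields
\[
H \;=\; O_{p'}(H)\,N_H\!\big(W(D\cap S)\big).
\]

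The second step is to globalize. By Alperin's fusion theorem, strong $G$-fusion in $P$ is generated by the fusion occurring inside the $p$-local subgroups $N_G(U)$, as $U$ runs over a conjugation family (for whose members $N_P(U)$ is Sylow in $N_G(U)$), so it suffices to show that each such conjugation is induced by an element of $N_G(W(D))$. Running the standard Alperin induction on the order of the subgroups being fused: when an element conjugates a subgroup $A\le N_P(U)$ within $H=N_G(U)$, the displayed identity lets us write it as a product of an element of $O_{p'}(H)$ and an element of $N_H(W(D\cap N_P(U)))$, the $O_{p'}(H)$-factor being harmless since it does not move the relevant $p$-subgroups outside their $H$-conjugacy class. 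Because $D$ is strongly closed in $P$, every $p$-subgroup arising along such a chain stays inside $D$, so the local functors $W(D\cap-)$ match up consistently along the chain; iterating, the original $G$-conjugation is induced by an element of $N_G(W(D))$. This is exactly the argument of \cite[Theorem B]{Gla}, carried out for the conjugation functor $S\mapsto W(D\cap S)$, and it gives that $N_G(W(D))$ controls strong $G$-fusion in $P$ for each of the three choices of $W$.

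Routine here are the facts that $p$-stability and strong closure descend to subgroups, to $p$-local subgroups, and to the sections $H/O_{p'}(H)$, and the Frattini argument. The main obstacle is the second step: performing the Alperin-based stitching so that the one global subgroup $W(D)$ -- not the a priori different local subgroups $W(D\cap N_P(U))$ -- ends up governing all of the fusion, while disposing of the $p'$-cores $O_{p'}(N_G(U))$. Eliminating these cores is precisely what forces the hypothesis that $N_G(U)$ be $p$-constrained for every nontrivial $U\le P$, as opposed to only requiring $C_G(O_p(G))\le O_p(G)$ as in Theorem B; this is why Theorem E is proved separately rather than deduced from Theorem B directly.
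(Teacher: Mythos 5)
Your first step is sound and is exactly the paper's local input (its Theorem 3.6): for $H=N_G(U)$ pass to $\bar H=H/O_{p'}(H)$, use that $p$-stability survives quotients by $p'$-cores and that $p$-constraint gives $C_{\bar H}(O_p(\bar H))\le O_p(\bar H)$, apply Theorem B to $\overline{D\cap S}$, pull back to $W(D\cap S)O_{p'}(H)\trianglelefteq H$, and conclude $H=O_{p'}(H)N_H(W(D\cap S))$ by Frattini. This matches the paper, which then converts the factorization into control of $H$-fusion via \cite[Lemma 7.1]{Gla}.

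The soft spot is your second step. The paper does not redo an Alperin induction: it first proves (Lemma 4.2) that the assignment $U\mapsto W(\langle U\cap D\rangle)$ on subgroups of $P$ (with the fallback $W(U)$ when $\langle U\cap D\rangle=1$), extended to all $p$-subgroups of $G$ by conjugation, is a well-defined conjugacy functor --- this is precisely where strong closure of $D$ enters, through the identity $(U\cap D)^g=U^g\cap D$ for $U,U^g\le P$ --- and then feeds the local control statement into Glauberman's transfer theorem \cite[Theorem 5.5(i)]{Gla2}, which is the packaged form of the stitching you sketch. Your justification of that stitching is where the gap lies: the claim that ``every $p$-subgroup arising along such a chain stays inside $D$'' is false (the subgroups being fused in Alperin's theorem are arbitrary subgroups of $P$, not subgroups of $D$) and is not the point; what is actually needed is the conjugation-equivariance $W_D(V)^x=W_D(V^x)$ of the local functor, which has to be verified rather than asserted. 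Note also that Glauberman's transfer theorem wants the local statement for every nontrivial $U\le P$ with $S\in Syl_p(N_G(U))$ arbitrary, not only for members of a conjugation family, which is why the paper runs its local step in that generality. If you establish the conjugacy-functor property of $U\mapsto W(\langle U\cap D\rangle)$ and then invoke \cite[Theorem 5.5(i)]{Gla2} (or carry out the induction with that equivariance in hand), your argument closes up and coincides with the paper's.
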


\begin{remark}
	In \cite{H}, it is proven that if $G$ is $p$-stable and $p>3$ then $G$ is $p$-constrained by using classification of finite simple groups (see Proposition 2.3 in \cite{H}). Thus, the assumption ``$N_G(U)$ is $p$-constrained for each nontrivial subgroup $U$ of $P$'' is automatically satisfied when $p>3$ and $G$ is a $p$-stable group.
\end{remark}

	\begin{theorem}\label{F}
Let $p$ be an odd prime, $G$ be a \Qdp-free group, and $P\in Syl_p(G)$. If $D$ is a strongly closed subgroup in $P$ then the normalizers of the subgroups $Z(J_o(D))$, $\Omega(Z(J_r(D)))$ and $\Omega(Z(J_e(D)))$   control strong $G$-fusion in $P$.
	\end{theorem}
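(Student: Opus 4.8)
The plan is to adapt Glauberman's derivation of control of fusion from his $ZJ$-theorem (\cite[Theorem~B]{Gla}), now feeding in Theorem~\ref{B} for each of the three Thompson subgroups and reducing to the $p$-local subgroups that appear in Alperin's fusion theorem. Fix $W\in\{Z(J_o(D)),\,\Omega(Z(J_r(D))),\,\Omega(Z(J_e(D)))\}$ and let $J=J_x(D)$ be the corresponding Thompson subgroup, $x\in\{o,r,e\}$. Since $D$ is strongly closed in $P$ it is normal in $P$, so $J\lhd P$, hence $W\lhd P$ and $P\le N_G(W)$; and each element of $N_G(P)$ normalizes $D$ by strong closure, hence normalizes $W$, so $N_G(P)\le N_G(W)$. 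I would also record at the outset the transport lemma that powers the local-to-global step: if $g\in G$ and $J^{g}\le P$ then $J^{g}=J$, for strong closure of $D$ forces every $G$-conjugate of a member of $\mathcal A_x(D)$ that lies in $P$ to lie in $D$, hence to lie again in $\mathcal A_x(D)$ (as $d_x(D)$ is conjugation invariant); the same reasoning gives $J_x(D\cap R)=J_x(D)$, and therefore $Z(J_x(D\cap R))=Z(J_x(D))$ (respectively $\Omega(Z(\,\cdot\,))$), for every $R\le P$ normalized by $J$.

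The argument runs by induction on $|G|$. As $\Qdp$-free groups are $p$-stable, $G$ and all its subgroups are $p$-stable. Passing to $G/O_{p'}(G)$ — again $\Qdp$-free, with $P$, $D$, $W$ mapping isomorphically and strong closure preserved — one may assume $O_{p'}(G)=1$; control of strong fusion in $P$ transfers back along this quotient. If $C_G(O_p(G))\le O_p(G)$, then Theorem~\ref{B} gives $W\lhd G$ and we are done. So assume $C_G(O_p(G))\not\le O_p(G)$, equivalently (as $O_{p'}(G)=1$) that $G$ has a component; in particular $N_G(P)<G$. By Alperin's fusion theorem in the Alperin--Goldschmidt form, $G$-fusion in $P$ is controlled by $P$ together with the essential subgroups $Q\le P$. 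Each such $Q$ is $p$-centric (i.e.\ $Z(Q)\in Syl_p(C_G(Q))$), so $C_P(Q)=Z(Q)$, $C_G(Q)=Z(Q)\times O_{p'}(C_G(Q))$ and $O_{p'}(N_G(Q))=O_{p'}(C_G(Q))\le C_G(Q)$; hence $\overline{N_G(Q)}:=N_G(Q)/O_{p'}(N_G(Q))$ satisfies $C_{\overline{N_G(Q)}}(O_p(\overline{N_G(Q)}))\le O_p(\overline{N_G(Q)})$, i.e.\ $N_G(Q)$ is $p$-constrained, and $N_G(Q)<G$ (else $C_G(O_p(G))\le C_G(Q)=Z(Q)\le O_p(G)$). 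Being $\Qdp$-free of smaller order, $N_G(Q)$ is covered by the induction hypothesis: with $R=N_P(Q)\in Syl_p(N_G(Q))$ and $D\cap R$ strongly closed in $R$ with respect to $N_G(Q)$, the normalizer in $N_G(Q)$ of $Z(J_x(D\cap R))$ (respectively $\Omega(Z(\,\cdot\,))$) controls strong $N_G(Q)$-fusion in $R$; moreover, when a conjugating element of $N_G(Q)$ is adjusted against this normalizer the residual factor lies in $O_{p'}(N_G(Q))\le C_G(Q)$ and hence centralizes the subgroup of $Q$ being moved.

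The remaining task, and the main obstacle, is to replace the local subgroups $Z(J_x(D\cap R))$ by the global $W$ and to assemble the pieces into control by $N_G(W)$. By the transport lemma, $Z(J_x(D\cap N_P(Q)))=W$ as soon as $J$ normalizes $Q$, so it would suffice to run Alperin's theorem through a conjugation family every member of which is normalized by $J$. This is the delicate point, and it is exactly where $p$-stability (coming from $\Qdp$-freeness) and the normality of $D$, hence of $J$, in $P$ are used; as in Glauberman's original treatment, one shows that the subgroups $Q\le P$ normalized by $J$, together with $P$, still form a conjugation family, exploiting the weak closure and the normality in $P$ of $J$. Granting this, any element effecting a fusion in $P$ decomposes, by Alperin, into a product of elements each lying in some $N_G(Q)$ with $Q$ normalized by $J$; by the local step each such factor equals an element centralizing the subgroup being moved times an element of $N_G(W)$, and together with $N_G(P)\le N_G(W)$ this yields control of strong $G$-fusion in $P$ by $N_G(W)$, closing the induction. (For $p>3$ one may instead shortcut the local $p$-constrainedness check by invoking the Remark after Theorem~\ref{E} and quoting Theorem~\ref{E}.)
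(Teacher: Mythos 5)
Your outline correctly reduces to the constrained case via Theorem~\ref{B} (after killing $O_{p'}(G)$), and your local observations (essential subgroups are $p$-centric, so their normalizers are proper and $p$-constrained; the ``transport'' identity $J_x(D\cap R)=J_x(D)$ when $J=J_x(D)\le R$) are all sound. But the proof has a genuine gap exactly where you flag ``the delicate point'': you assert, without proof, that the subgroups $Q\le P$ normalized by $J$, together with $P$, form a conjugation family, and then say ``granting this'' the argument closes. That assertion is the entire content of the theorem in this approach. By Goldschmidt's theorem every conjugation family contains, up to conjugacy, all essential subgroups, so your claim amounts to: in a $\Qdp$-free group every essential subgroup of $P$ is normalized by $J$ (equivalently by some $P$-conjugate of $J$, but $J\lhd P$). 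Nothing in the preceding reductions delivers this, and it is not ``as in Glauberman's original treatment'' --- Glauberman does not establish control of fusion by exhibiting a $J$-invariant conjugation family. Without this step the induction never gets off the ground in the unconstrained case, and the parenthetical fallback to Theorem~\ref{E} only covers $p>3$ (and imports CFSG), so $p=3$ is left entirely open.

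For comparison, the paper takes a different and gap-free route: it builds from $D$ a \emph{section} conjugacy functor $W^*_D$ (Lemmas 4.1--4.5), uses Theorem~\ref{B} to verify that $W^*_D(S)$ is normal in every section $G^*$ with $C_{G^*}(O_p(G^*))\le O_p(G^*)$ (Lemma 4.6), and then invokes \cite[Theorem 6.6]{Gla2}, which is precisely the black box that converts ``normal in all constrained sections'' into ``the normalizer controls strong fusion'' for $\Qdp$-free groups. In effect, the hard local-to-global assembly you are trying to redo by hand via Alperin's fusion theorem is exactly what \cite[Theorem 6.6]{Gla2} encapsulates; the real work in the paper's proof of Theorem~\ref{F} is checking that $W^*_D$ is a legitimate section conjugacy functor (so that the citation applies with the subgroup $D\ne P$), not reproving the fusion machinery. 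If you want to keep your Alperin-based strategy, you would need to either prove the $J$-invariant conjugation family claim (which I do not believe is true in the generality you need) or replace it with an induction of the type used to prove \cite[Theorem 6.6]{Gla2} itself.
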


		\begin{remark}
			In Theorem \ref{F}, if we take $D=P$, then the proof of this special case follows by Theorem \ref{B} and \cite[Theorem 6.6]{Gla2}. However, the general case requires some extra work. Indeed, we shall prove Theorem \ref{F} by constructing an appropriate section conjugacy functor depending on $D$, and applying \cite[Theorem 6.6]{Gla2}.
		\end{remark}
		The following is an easy corollary of Theorem \ref{F}.
		\begin{corollary}
Let $p$ be an odd prime, $G$ be a $\Qdp$-free group, and $P\in Syl_p(G)$. If the exponent of $\Omega(D)$ is $p$, then the normalizers of the subgroups $Z(J_o(\Omega(D))),Z(J_r(\Omega(D)))$ and $Z(J_e(\Omega(D)))$ control strong $G$-fusion in $P$.
		\end{corollary}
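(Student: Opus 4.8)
The plan is to deduce this statement from Theorem~\ref{F} in exactly the way Corollary~\ref{C} was deduced from Theorem~\ref{B}; as in Corollary~\ref{C}, I read the hypothesis as also including that $D$ is strongly closed in $P$. The first step is to verify that $\Omega(D)$ is itself strongly closed in $P$. Indeed, if $U\leq \Omega(D)$ and $g\in G$ with $U^g\leq P$, then $U\leq D$ and strong closure of $D$ give $U^g\leq D$; since the exponent of $\Omega(D)$ is $p$, every element of $U$, hence of $U^g$, has order dividing $p$, so $U^g\leq \Omega(D)$. Thus $\Omega(D)$ is strongly closed in $P$.

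The second step is to record that in a $p$-group $X$ of exponent $p$ every abelian subgroup is elementary abelian, and for such a subgroup order $p^k$ corresponds to rank $k$; hence $\mathcal A_r(X)=\mathcal A_o(X)=\mathcal A_e(X)$ and therefore $J_o(X)=J_r(X)=J_e(X)$. Applying this with $X=\Omega(D)$ shows that $J_o(\Omega(D))$, $J_r(\Omega(D))$ and $J_e(\Omega(D))$ all coincide; moreover their common center is a subgroup of $\Omega(D)$, hence has exponent dividing $p$, so $\Omega(Z(J_r(\Omega(D))))=Z(J_r(\Omega(D)))$ and likewise for the $e$-version.

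Finally, since $G$ is $\Qdp$-free and $\Omega(D)$ is strongly closed in $P$ by the first step, Theorem~\ref{F} applied to the strongly closed subgroup $\Omega(D)$ yields that the normalizers of $Z(J_o(\Omega(D)))$, $\Omega(Z(J_r(\Omega(D))))$ and $\Omega(Z(J_e(\Omega(D))))$ control strong $G$-fusion in $P$; by the second step these are precisely the normalizers of $Z(J_o(\Omega(D)))$, $Z(J_r(\Omega(D)))$ and $Z(J_e(\Omega(D)))$, which is the assertion. There is no genuine obstacle here: the only points requiring care are that the exponent condition forces conjugates of subgroups of $\Omega(D)$ landing in $P$ to land back in $\Omega(D)$ (immediate from strong closure of $D$), and that the three Thompson subgroups collapse to one in the exponent-$p$ setting, which in turn removes the operator $\Omega$ from the conclusion of Theorem~\ref{F}.
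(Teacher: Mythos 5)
Your proof is correct and takes essentially the same route as the paper: verify that $\Omega(D)$ is strongly closed in $P$ exactly as in Corollary~\ref{C}, note that the exponent-$p$ hypothesis collapses the three Thompson subgroups (and makes the operator $\Omega$ on their centers redundant), and then apply Theorem~\ref{F} to the strongly closed subgroup $\Omega(D)$. Your explicit remarks that the hypothesis tacitly includes $D$ being strongly closed and that $\Omega(Z(J_x(\Omega(D))))=Z(J_x(\Omega(D)))$ are welcome clarifications of details the paper leaves implicit.
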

		
		\begin{proof}[\textbf{Proof}]
			As in the proof of Corollary \ref{C}, we see that $\Omega(D)$ is strongly closed in $P$ since the exponent of $\Omega(D)$ is $p$. Thus, $J_o(\Omega(D))=J_r(\Omega(D))=J_e(\Omega(D))$ and the result follows by Theorem \ref{F}.
		\end{proof}

		Lastly we state an extension of Glauberman-Thompson $p$-nilpotency theorem.
		\begin{theorem}\label{H}
	Let $p$ be an odd prime, $G$ be a group and $P\in Syl_p(G)$. If $D$ is a strongly closed subgroup in $P$ then $G$ is $p$-nilpotent if one of the normalizer of subgroups $Z(J_o(D))$, $\Omega(Z(J_r(D)))$ and $\Omega(Z(J_e(D)))$ is $p$-nilpotent.
		\end{theorem}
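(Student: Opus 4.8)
The plan is to argue by induction on $|G|$, so suppose $G$ is a counterexample of least order: $D$ is strongly closed in $P\in Syl_p(G)$, one of $W\in\{Z(J_o(D)),\,\Omega(Z(J_r(D))),\,\Omega(Z(J_e(D)))\}$ has a $p$-nilpotent normalizer in $G$, but $G$ is not $p$-nilpotent. Since $D$ is strongly closed it is normal in $P$, so $W$, being characteristic in $D$, is normal in $P$ and $P\leq N_G(W)$. I would first run the standard reductions. Passing to $\overline G=G/O_{p'}(G)$: the natural map $P\to\overline P$ is an isomorphism, $\overline D$ is strongly closed in $\overline P$, $\overline W$ is the corresponding characteristic subgroup of $\overline D$, and a Frattini argument identifies $N_{\overline G}(\overline W)$ with the image of $N_G(W)$, which is therefore $p$-nilpotent; minimality forces $\overline G$, and then $G$, to be $p$-nilpotent, a contradiction, so $O_{p'}(G)=1$. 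If $P\leq H<G$, then $D$ remains strongly closed in $P$ with respect to $H$, the subgroup $W$ is unchanged, and $N_H(W)\leq N_G(W)$ is $p$-nilpotent, so $H$ is $p$-nilpotent by minimality; thus every proper overgroup of $P$ in $G$ is $p$-nilpotent, in particular $p$-constrained. Consequently $P$ is not normal in $G$ (otherwise $D\lhd G$, whence $W\lhd G$ and $N_G(W)=G$ would be $p$-nilpotent), and $G=O^{p'}(G)$ (otherwise $O^{p'}(G)$ would be a proper $p$-nilpotent overgroup of $P$ whose normal $p$-complement, being characteristic in $O^{p'}(G)\lhd G$, lies in $O_{p'}(G)=1$, so $O^{p'}(G)$ would be a normal $p$-subgroup of $G$, forcing $P\lhd G$).

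Next I would treat the case that $G$ is $\Qdp$-free. Then Theorem~\ref{F} applies and $N_G(W)$ controls strong $G$-fusion in $P$; since $N_G(W)$ is $p$-nilpotent it realizes the trivial fusion system on $P$, hence so does $G$, so $G$ is $p$-nilpotent by Frobenius's normal $p$-complement criterion (equivalently, the hyperfocal subgroup $P\cap O^p(G)$ is trivial, forcing $O^p(G)\leq O_{p'}(G)=1$ and $G$ to be a $p$-group) --- a contradiction. So $G$ involves $\Qdp$. Now I would reduce to the $p$-constrained case: if $F^*(G)$ had a component $E$, then $EP$ would be a non-$p$-nilpotent overgroup of $P$ (every perfect subgroup of a $p$-nilpotent group lies in its $O_{p'}$, and a subnormal $p'$-subgroup of $G$ lies in $O_{p'}(G)=1$), hence $E(G)P=G$; the standard analysis of this almost-simple configuration, using that every proper overgroup of $P$ is $p$-nilpotent, then yields a contradiction, so $F^*(G)=O_p(G)$ and, as $O_{p'}(G)=1$, $C_G(O_p(G))\leq O_p(G)$. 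Theorem~\ref{B} then applies the moment $G$ is $p$-stable, giving $W\lhd G$ and $N_G(W)=G$ $p$-nilpotent, a contradiction.

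It remains --- and this is where the main work lies --- to rule out a $p$-constrained $G$ that is not $p$-stable. By Glauberman's analysis of $p$-stability (\cite{Gla2}), together with $C_G(O_p(G))\leq O_p(G)$, failure of $p$-stability is precisely the $\Qdp$-involvement of $G$ over $O_p(G)$: $G$ has a section isomorphic to $\Qdp$ whose normal elementary abelian subgroup of order $p^2$ is a section of $O_p(G)$ carrying the natural $SL(2,p)$-action. I would then localize to the preimage in $G$ of a point stabilizer (a Borel subgroup) of this $SL(2,p)$: it is a proper overgroup of $P$, hence $p$-nilpotent, but its normal $p$-complement lies in $O_{p'}(G)=1$, so it is a $p$-group, whereas its order is properly divisible by $p-1$ --- a contradiction, which completes the induction. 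The two delicate ingredients, namely the elimination of components in the reduction to the $p$-constrained case and the precise local configuration forced by the failure of $p$-stability, are exactly the technical heart of the classical Glauberman--Thompson theorem, and I expect them to be the main obstacle here. An alternative organisation, parallel to the proof of Theorem~\ref{F}, would bypass the $\Qdp$-dichotomy altogether by constructing a section conjugacy functor on the $p$-subgroups of $G$ taking the value $W$ at $P$ and invoking Glauberman's normal $p$-complement theorem for conjugacy functors from \cite{Gla2}.
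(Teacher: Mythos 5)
Your opening reductions are fine: passing to $G/O_{p'}(G)$, observing that every proper overgroup of $P$ inherits the hypotheses and is therefore $p$-nilpotent by minimality, and disposing of the $\Qdp$-free case via Theorem~\ref{F} together with the Frobenius criterion all work. But the proof collapses at precisely the two points you yourself flag as ``delicate,'' and these are not loose ends --- they are the whole content of the theorem in the remaining case. (1) Component elimination: once you arrive at $G=E(G)P$ with every proper overgroup of $P$ $p$-nilpotent, there is no ``standard analysis'' that concludes; the classical proofs of the Glauberman--Thompson theorem never enter this configuration because they do not reduce through $F^*(G)$ at all, and finishing from here would require serious additional input. (2) The Borel-subgroup step contains an actual error: the preimage $B$ of a Borel subgroup of the $SL(2,p)$-section is at best a \emph{proper} subgroup of $G$, so its normal $p$-complement is normal in $B$ but not in $G$ and need not lie in $O_{p'}(G)=1$; there is no contradiction between $B$ being $p$-nilpotent and $|B|$ being divisible by $p-1$ (a direct product of $P$ with a cyclic group of order $p-1$ is $p$-nilpotent). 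Moreover, you have not justified that this preimage contains $P$, nor the precise localisation of the $\Qdp$-section over $O_p(G)$ that the argument presupposes.

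The paper takes exactly the route you sketch in your final sentence and never performs the $\Qdp$/$p$-stability case division on a minimal counterexample. The map $W^*_D$ built in Section~4 is shown to be a section conjugacy functor (Lemma~\ref{final}), and Glauberman's normal $p$-complement theorem for section conjugacy functors \cite[Theorem 8.7]{Gla2} reduces the $p$-nilpotency of $G$ to verifying $W^*_D(S^*)\lhd G^*$ only for sections $G^*$ satisfying $C_{G^*}(O_p(G^*))\leq O_p(G^*)$ with $G^*/O_p(G^*)$ $p$-nilpotent and $S^*\in Syl_p(G^*)$ maximal in $G^*$. For such $G^*$ one shows the Hall $p'$-subgroups are abelian, hence $G^*$ involves no $SL(2,p)$ and every section of $G^*$ is $p$-stable by \cite[Proposition 14.7]{Gla2}, so Theorem~\ref{B} (through Lemma~\ref{ff}) supplies the required normality. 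All of the difficulty you identify is absorbed into the quoted Glauberman theorem; to salvage your minimal-counterexample organisation you would essentially have to reprove it.
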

		
		\section{The proof of theorem A}
		We first state the following lemma, which is extracted from the proof of Glauberman replacement theorem. 
		
		\begin{lemman}[Glauberman]\label{Glb}
			Let $p$ be an odd prime and $G$ be a $p$-group. Suppose that $G=BA$ where $B$ is a normal subgroup of $G$ such that $B'\leq Z(G)$ and $A$ is an abelian subgroup of $G$ such that $[B,A,A,A]=1$. Then $[b,A]$ is an abelian subgroup of $G$ for each $b\in B$.
		\end{lemman}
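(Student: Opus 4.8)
The plan is to reduce the lemma to a single commutator identity and then to exploit the oddness of $p$ at exactly one point. Since $B$ is normal in $G$, we have $[b,a]\in B$ for every $a\in A$, so $[b,A]=\langle [b,a]:a\in A\rangle\leq W$, where $W:=[B,A]$; putting $W_2:=[B,A,A]$, the hypothesis $[B,A,A,A]=1$ reads precisely $[W,A,A]=1$, so that $[w,a]$ is centralized by every element of $A$ whenever $w\in W$ and $a\in A$. As a group generated by pairwise commuting elements is abelian, it suffices to prove $[[b,a_1],[b,a_2]]=1$ for all $a_1,a_2\in A$; I write $X$ for this commutator and note at once that $X\in B'\leq Z(G)$. (If convenient one may first replace $B$ by $\langle b\rangle^{A}$, which still has derived group in $Z(\langle b\rangle^{A}A)$ and satisfies $[\langle b\rangle^{A},A,A,A]=1$.)

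First I would record an elementary identity. Put $u=[b,a_1]$ and $v=[b,a_2]$ in $W$, and $c=[b,a_1,a_2]=[u,a_2]$, $d=[b,a_2,a_1]=[v,a_1]$ in $W_2$. Expanding $[b,a_1a_2]$ in the two ways $[b,a_2]\,[b,a_1]^{a_2}=vuc$ and $[b,a_1]\,[b,a_2]^{a_1}=uvd$ and using that $A$ is abelian, one gets $X=[u,v]=cd^{-1}$; in particular $c=Xd$.

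The main work will be a second, independent expression for $X$, which I would obtain from the Hall--Witt identity applied to the triple $b,a_1,a_2$. Since $A$ is abelian the summand of that identity containing $[a_1,a_2]$ is trivial, and the two surviving summands can be simplified by repeatedly using that all commutators of elements of $B$ lie in $Z(G)$ (so $B$ has class at most $2$ and $B'$ is central in $G$) together with $[B,A,A,A]=1$; these are exactly the inputs that force the residual higher-weight terms to vanish. The outcome should be
\[
X=[b,[b,a_1,a_2]]=[b,c]\qquad\text{and}\qquad X=[b,[b,a_2,a_1]]^{-1}=[b,d]^{-1},
\]
the second relation being the first one for the reversed pair $a_2,a_1$ together with $[[b,a_2],[b,a_1]]=X^{-1}$. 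Combining these with $c=Xd$ and $X\in Z(G)$ gives $[b,c]=[b,Xd]=[b,d]$, hence $X=[b,c]=[b,d]$; comparing with $X=[b,d]^{-1}$ yields $[b,d]^{2}=1$. As $G$ is a $p$-group with $p$ odd, this forces $[b,d]=1$, so $X=1$, and $[b,A]$ is abelian.

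I expect the only genuinely delicate point to be the Hall--Witt computation of the second expression for $X$: one must verify that, under the two standing hypotheses, the commutator collection closes with no leftover higher-weight terms -- conceptually the group version of the Jacobi identity becoming exact once the effective nilpotency is bounded. Everything else is formal manipulation, and the oddness of $p$ is used exactly once, to pass from $[b,d]^{2}=1$ to $[b,d]=1$; this is precisely the step that fails when $p=2$.
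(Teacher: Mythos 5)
Your proposal is correct and takes essentially the same route as the paper: the Hall--Witt identity applied to a triple built from $b$ and the two elements of $A$, with the weight-three commutators landing in $B'\le Z(G)$ (so the conjugations can be dropped and one of the three terms vanishes outright because its inner commutator already lies in $B'$), produces exactly the identity you predict, namely $[[b,a_1],[b,a_2]]=[[b,a_1,a_2],b]$ (your $X=[b,c]$ up to a harmless inversion of convention that does not affect the argument), after which your congruence $c\equiv d\bmod B'$ --- which you obtain by expanding $[b,a_1a_2]$ in two ways where the paper instead invokes a standard commutator lemma --- together with the swap symmetry gives $X=X^{-1}$ and hence $X=1$ by oddness. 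The Hall--Witt collection that you defer does close with no leftover higher-weight terms under precisely the two hypotheses you name, so there is no gap.
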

		
		\begin{proof}[\textbf{Proof}]
			Let $x,y\in A$. Our aim is to show that $[b,x]$ and $[b,y]$ commute. Set $u=[b,y]$. If we apply Hall-Witt identity to the triple $(b,x^{-1},u)$, we obtain that
			$$[b,x,u]^{x^{-1}}[x^{-1},u^{-1},b]^{u}[u,b^{-1},x^{-1}]^b=1.$$
			
			Note that the above commutators of weight $3$ lie in the center of $G$ since $B'\leq Z(G)$. Thus we may remove conjugations in the above equation. Moreover, $[u,b^{-1},x^{-1}]=1$ as $[u,b^{-1}]\in B'$. Thus we obtain that $[b,x,u][x^{-1},u^{-1},b]=1$, and so
			$$[b,x,u]=[x^{-1},u^{-1},b]^{-1}.$$
			
			Since $[x^{-1},u^{-1},b]=[[x^{-1},u^{-1}],b]\in Z(P)$, we see that
			$$[x^{-1},u^{-1},b]^{-1}=[[x^{-1},u^{-1}],b]^{-1}=[[x^{-1},u^{-1}]^{-1},b]=[[u^{-1},x^{-1}],b]$$ by \cite[Lemma 2.2.5(ii)]{Gor}. As a consequence, we get that $[b,x,u]=[[u^{-1},x^{-1}],b]$. By inserting $u=[b,y]$, we obtain $$[[b,x],[b,y]]=[[[b,y]^{-1},x^{-1}],b].$$
			
			Now set $\overline G=P/B'$. Then clearly $\overline B$ is abelian. It follows that $[\overline B,\overline A, \overline A]\leq Z(\overline P)$ since $[B,A,A,A]=1$ and $\overline B$ is abelian. Then we have
			$$[[b,y]^{-1},x^{-1}]\equiv [[b,y]^{-1},x]^{-1}\equiv [[b,y],x] \ mod \ B' $$ by applying \cite[Lemma 2.2.5(ii)]{Gor} to $\overline G$. Since $x$ and $y$ commute and $\overline {[b,A]}\subseteq \overline B$ is abelian, we see that $$[b,y,x]\equiv [b,x,y] \ mod \ B'$$
			by \cite[Lemma 2.2.5(i)]{Gor}.
			
			Finally we obtain $$[[b,x],[b,y]]=[[[b,y]^{-1},x^{-1}],b]= [[[b,y],x],b]=[[b,x,y],b].$$
			
			By symmetry, we also have that $[[b,y],[b,x]]=[[b,x,y],b]$. Then it follows that $[[b,y],[b,x]]=[[b,y],[b,x]]^{-1}$, and so $[[b,x],[b,y]]=1$ since $G$ is of odd order.

		\end{proof}
		\begin{lemman}\label{rank lemma}
		Let $A$ be an abelian $p$-group and $E$ be the largest elementary abelian subgroup of $A$. Then $rank(E)=rank(A)$.
	\end{lemman}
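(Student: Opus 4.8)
The plan is to identify the largest elementary abelian subgroup of $A$ explicitly and then read off its rank from the cyclic decomposition of $A$. First I would apply the fundamental theorem of finite abelian groups to write $A = C_1 \times \cdots \times C_k$, where each $C_i$ is a nontrivial cyclic $p$-group; by definition $rank(A) = k$.

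Next, since $A$ is abelian, the set $\{x \in A \mid x^p = 1\}$ is already a subgroup, so it coincides with $\Omega(A)$, and it is elementary abelian. Moreover, any elementary abelian subgroup of $A$ consists of elements of order dividing $p$ and is therefore contained in $\Omega(A)$. Hence $\Omega(A)$ is the (unique) largest elementary abelian subgroup of $A$; in other words $E = \Omega(A)$. Finally, since forming $\Omega$ respects direct products, $\Omega(A) = \Omega(C_1) \times \cdots \times \Omega(C_k)$, and each $\Omega(C_i)$ is cyclic of order $p$, so $rank(E) = rank(\Omega(A)) = k = rank(A)$, as claimed.

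This is a routine structural fact, so I do not anticipate a genuine obstacle; the only point worth stating carefully is the identification $E = \Omega(A)$, i.e. that every elementary abelian subgroup of the abelian group $A$ is contained in $\Omega(A)$, which is immediate. (If one prefers to avoid the classification of finite abelian groups, the equality $rank(\Omega(A)) = rank(A)$ can instead be obtained by noting that both sides equal $\dim_{\mathbb F_p} A/A^p$, but the decomposition argument above is the cleanest.)
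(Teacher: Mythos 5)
Your proof is correct, but it takes a different route from the paper. You invoke the structure theorem for finite abelian groups to write $A=C_1\times\cdots\times C_k$ and then compute $E=\Omega(A)=\Omega(C_1)\times\cdots\times\Omega(C_k)$ directly, which also has the virtue of making explicit why the ``largest elementary abelian subgroup'' is unique (namely $E=\Omega(A)$), a point the statement takes for granted. The paper instead avoids the classification entirely: it considers the $p$-th power homomorphism $\phi:A\to A$, notes that $\phi(A)=\Phi(A)$ and $E=Ker(\phi)$, so $|E|=|A/\Phi(A)|$, and concludes $rank(E)=rank(A/\Phi(A))=rank(A)$ since both $E$ and $A/\Phi(A)$ are elementary abelian of the same order. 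That is essentially the alternative you sketch in your closing parenthesis (both ranks equal $\dim_{\mathbb F_p}A/A^p$), so you have in effect found both arguments; the paper's version is slightly more economical in its prerequisites, while yours is more concrete and self-explanatory. Either is a complete proof.
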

	\begin{proof}[\textbf{Proof}]
		Consider the homomorphism $\phi:A\to A$ by $\phi(a)=a^p$ for each $a\in A$. Notice that $\phi(A)=\Phi(A)$ and $E=Ker(\phi)$, and so $|A/\Phi(A)|=|E|$. Since both $E$ and $A/\Phi(A)$ are elementary abelian groups of same order, we get $rank(E)=rank(A/\Phi(A))$. On the other hand, $rank(A/\Phi(A))=rank(A)$ and the result follows.
	\end{proof}
		
		\begin{proof}[\textbf{Proof of Theorem A}]
			We proceed by induction on the order of $G$. We can certainly assume that $G=AB$. Since $A$ is not normal in $G$, there exists a maximal subgroup $M$ of $G$ such that $A\leq M$.
			
			Clearly $A$ normalizes $M\cap B$ as both $M$ and $B$ are normal in $G$. Suppose that $M\cap B$ does not normalize $A$. By induction applied to $M$, there exists a subgroup $A^*$ of $M$ such that $A^*$ satisfies the conclusion  of the theorem. Then $A^*$ also satisfies $(a),\ (c)$ and $(d)$ in $G$. Moreover, $A\cap (M\cap B)=A\cap B<  A^*\cap B$, and so $G$ also satisfies the theorem. Hence, we can assume that $M\cap B\leq N_G(A)$. Notice that $M=M\cap AB=A(M\cap B)$, and so $M=N_G(A)$.
			
			Clearly $M\cap B$ is a maximal subgroup of $B$. Then $A$ acts trivially on $B/(M\cap B)$, and so $[B,A]\leq M=N_G(A)$. Thus, we see that $[B,A,A]\leq A$ which yields $[B,A,A,A]=1.$ Moreover, we have that $B'\leq Z(G)$ since $B'\leq A$ and $B'\leq Z(B)$. It follows that $[b,A]$ is abelian for any $b\in B$ by Lemma \ref{Glb}.
			
			Let $b\in B\setminus M$. Then $A\neq A^b\lhd M$. Set $H=AA^b$ and $Z=A\cap A^b$. Then clearly $H$ is a group and $Z\leq Z(H)$. On the other hand, $H$ is of class at most $2$ since $H/Z$ is abelian. Note that the identity $(xy)^n=x^ny^n[x,y]^{\frac{n(n-1)}{2}}$ holds for all $x,y\in H$ as $H$ is of odd order. It follows that the exponent of $H$ is  the same as the exponent of $A$. 
			
			Now we shall show that $H\cap B$ is abelian. First we claim that $H\cap B=(A\cap B)[A,b]$.
			Clearly, we have $[A,b]\subseteq H\cap B$ since $H=AA^b$. It follows that $(A\cap B)[A,b]\subseteq H\cap B$ as $A\cap B \leq H\cap B$. Next we obtain the reverse inequality.
			Let $x\in H\cap B$. Then $x=ac^b$ for $ a,c\in A$ such that $ac^b\in B$.
			Since $B\lhd G$, we see that $[c,b]\in B$, and so $ac\in B$ as $ac[c,b]=ac^b\in B$.
			 It follows that $ac\in A\cap B$ and $x=ac[c,b]\in (A\cap B)[A,b]$, which proves the equality. Since $B'\leq A$, we see that $A\cap B\lhd B$. Then $A\cap B=A^b\cap B$ and hence $A\cap B=Z\cap B$. In particular, we see that $A\cap B\leq Z\leq Z(H)$. It follows that $H\cap B=(A\cap B)[A,b]$ is abelian since $[A,b]$ is an abelian subgroup of $H$ and $(A\cap B)\leq Z(H)$.
			
			Now set $A^*=(H\cap B)Z$. Note that $A^*$ is abelian as $H\cap B$ is abelian and $Z\leq Z(H)$. Now we shall show that $A^*$ is the desired subgroup.
			Clearly, the exponent of $A^*$ divides the exponent of $H$, which shows the first part of $(d)$. Note that $A<H$ and $H=H\cap AB=A(H\cap B)$, and so $H\cap B>A\cap B$. It follows that $A^*\cap B\geq H\cap B>A\cap B$, which shows $(b)$. On the other hand, $$A^*\leq H=AA^b\leq M\cap A^G=N_G(A)\cap A^G,$$ which shows $(c)$. It remains to prove $(a)$ and the second part of $(d)$.
			Since $A^*=(H\cap B)Z$, we have
			$$|A^*|=\dfrac{|H\cap B||Z|}{|Z\cap B|}=\dfrac{|H\cap B||Z|}{|A\cap B|}.$$ 
			On the other hand, $H=AA^b=A(H\cap B)$. Hence we have 
			$$\dfrac{|A A^b|}{|A^b|}=\dfrac{|A|}{|A\cap A^b|}=\dfrac{|A|}{|Z|}=\dfrac{|H\cap B|}{|A\cap B|}.$$
			
			Thus, we see that $|A|=|A^*|$ as desired.
			
			 Now let $E$ be the largest elementary abelian subgroup of $A$. We shall observe that $E$ and $A$ enjoy some similar properties. Note that $E\lhd M=N_G(A)$ since $E$ is a characteristic subgroup of $A$. Hence, $EE^b$ is a group. 
			Now set $H_1=EE^b$, $Z_1=E\cap E^b$ and $E^*=(H_1\cap B)Z_1$. First observe that $Z_1\leq Z(H_1)$, and so $H_1$ is of class at most $2$. It follows that the exponent of $E^*$ is $p$ since $H_1$ is of odd order. Thus, $E^*$ is elementary abelian as $E^*\leq A^*$ and $A^*$ is abelian. Note also that $E\cap B=E\cap (A\cap B)$, and so $E\cap B$ is characteristic in $A\cap B$. Then we see that $E\cap B\lhd B$ as $A\cap B\lhd B$. This also yields that $E\cap B=(E\cap B)^b=E^b\cap B$, and hence $E\cap B=Z_1\cap B$. Lastly, observe that $H_1=EE^b=EE^b\cap EB=E(H_1\cap B)$. Now we can show that $|E|=|E^*|$ by using the same method used for showing that  $|A|=|A^*|$. Then we see that $ rank(A)=rank(E)=rank(E^*)\leq rank(A^*)$ by Lemma \ref{rank lemma}.

		\end{proof}
		
		\section{The proof of theorem \ref{B}}

		\begin{lemman}Let $P$ be a $p$-group and $R$ be a subgroup of $P$. Then if there exists $A\in \mathcal A_{x}(P)$ such that $A\leq R$ then $J_x(R)\leq J_x(P)$ for $x\in \{ o,r,e\}$.  Moreover, $J_x(P)=J_x(R)$ if and only if $J_x(P)\subseteq R$ for $x\in \{ o,r,e\}$.
		\end{lemman}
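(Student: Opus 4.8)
The plan is to reduce everything to a single numerical observation: under the stated hypothesis the invariants $d_x$ agree on $R$ and on $P$. First I would note that, since $R\le P$, every abelian (resp.\ elementary abelian) subgroup of $R$ is in particular such a subgroup of $P$, so $d_x(R)\le d_x(P)$ for each $x\in\{o,r,e\}$ (reading $d_r$ via $m(\cdot)$). The hypothesis gives some $A\in\mathcal A_x(P)$ with $A\le R$; this $A$ is an abelian (elementary abelian) subgroup of $R$ of the extremal size $d_x(P)$, so $d_x(R)\ge d_x(P)$, forcing $d_x(R)=d_x(P)$.

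Granting this, I would deduce $\mathcal A_x(R)\subseteq\mathcal A_x(P)$: any $B\in\mathcal A_x(R)$ is an abelian (elementary abelian) subgroup of $R\le P$ whose size is $d_x(R)=d_x(P)$, hence $B\in\mathcal A_x(P)$. Passing to generated subgroups, $J_x(R)=\langle\mathcal A_x(R)\rangle\le\langle\mathcal A_x(P)\rangle=J_x(P)$, which is the first assertion.

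For the ``moreover'' clause, the forward implication is immediate, since $J_x(R)\le R$ always, so $J_x(P)=J_x(R)$ gives $J_x(P)\subseteq R$. For the converse, assume $J_x(P)\subseteq R$; we may assume $P\neq 1$, so $\mathcal A_x(P)\neq\emptyset$, and then every $A\in\mathcal A_x(P)$ satisfies $A\le J_x(P)\le R$. In particular the hypothesis of the first part holds, giving $J_x(R)\le J_x(P)$; and by the equality $d_x(R)=d_x(P)$ established above, each such $A$ actually lies in $\mathcal A_x(R)$, hence $A\le J_x(R)$, so $J_x(P)=\langle\mathcal A_x(P)\rangle\le J_x(R)$. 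Combining the two inclusions yields $J_x(P)=J_x(R)$.

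The whole argument is elementary; there is no real obstacle, only the need to treat the three cases $x\in\{o,r,e\}$ uniformly. The one point deserving care is precisely the step $d_x(R)=d_x(P)$: one must check that ``extremal among the abelian (elementary abelian) subgroups of $R$'' coincides with ``extremal among those of $P$'' in each of the three meanings of ``extremal'' (largest order, largest rank, largest order of an elementary abelian subgroup), which is exactly what the presence of a single member of $\mathcal A_x(P)$ inside $R$ guarantees.
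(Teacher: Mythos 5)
Your argument is correct and is exactly the natural one; the paper itself gives no proof, remarking only that the lemma ``is an easy observation,'' and your reduction to the equality $d_x(R)=d_x(P)$ (hence $\mathcal A_x(R)\subseteq\mathcal A_x(P)$, and $\mathcal A_x(P)\subseteq\mathcal A_x(R)$ once $J_x(P)\subseteq R$) is precisely the intended verification.
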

		The above lemma is an easy observation and we shall use it without any further reference.
		
		\begin{lemman}\cite[Theorem 8.1.3]{Gor} \label{opg}
			Let $G$ be a $p$-stable group such that $C_G(O_p(G))\leq O_p(G)$. If $P\in Syl_p(G)$ and $A$ is an abelian normal subgroup of $P$ then $A\leq O_p(G)$.
		\end{lemman}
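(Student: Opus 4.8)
The plan is to prove this by induction on $|G|$, reducing to the case in which $Q:=O_p(G)$ is elementary abelian and then invoking $p$-stability directly. First I would record two consequences of the hypotheses. Since $O_{p'}(G)\trianglelefteq G$ and $[O_{p'}(G),Q]\leq O_{p'}(G)\cap Q=1$, we get $O_{p'}(G)\leq C_G(Q)\leq Q$, so $O_{p'}(G)=1$ and $C_G(Q)=Z(Q)$. Next, applying the $p$-stability hypothesis to the $p$-subgroup $Q$ itself — here $N_G(Q)=G$, and since any normal $p$-subgroup of $G/Z(Q)$ pulls back to a normal $p$-subgroup of $G$ we have $O_p(G/Z(Q))=Q/Z(Q)$ — we obtain the crucial implication: if $g\in G$ and $[Q,g,g]=1$, then $g\in Q$.

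Now suppose $G$ is a counterexample of least order, so there is an abelian $A\trianglelefteq P\in Syl_p(G)$ with $A\not\leq Q$ (and $Q\neq 1$). I would first pass to $G/\Phi(Q)$. A $p'$-automorphism of $Q$ that is trivial on $Q/\Phi(Q)$ is trivial on $Q$, so, using $C_G(Q)\leq Q$, every element of the kernel of the action of $G$ on $Q/\Phi(Q)$ is a $p$-element; being a subgroup, that kernel is a normal $p$-subgroup of $G$ and hence lies in $Q$. Therefore $C_{G/\Phi(Q)}(Q/\Phi(Q))\leq Q/\Phi(Q)$, while $O_p(G/\Phi(Q))=Q/\Phi(Q)$ and $G/\Phi(Q)$ is again $p$-stable (as $p$-stability passes to quotient groups). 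If $\Phi(Q)\neq 1$, minimality applied to $G/\Phi(Q)$ forces $A\Phi(Q)/\Phi(Q)\leq Q/\Phi(Q)$, i.e. $A\leq Q$, a contradiction. Hence $\Phi(Q)=1$: now $Q$ is elementary abelian, $C_G(Q)=Q$, $O_p(G/Q)=1$, and $G/Q$ acts faithfully on $V:=Q$. Writing $\bar G=G/Q$, the image $\bar A:=AQ/Q$ is then a nontrivial abelian subgroup, normal in the Sylow subgroup $\bar P=P/Q$ of $\bar G$, acting faithfully on $V$ with $[V,\bar A]=[Q,A]\neq 1$.

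It remains to derive a contradiction from this configuration, and this is the step I expect to be the main obstacle. By the crucial implication above (now with $O_p(\bar G)=1$), no nontrivial element of $\bar G$ can act quadratically on $V$. Yet $\bar A$ is an abelian group acting faithfully and nontrivially on the $\mathbb F_p$-module $V$ while being normal in the Sylow subgroup $\bar P$; the heart of the lemma is that, $\bar G$ being $p$-stable (with $p$ odd), such a configuration must nevertheless produce a nontrivial — indeed abelian and $\bar P$-invariant — subgroup of $\bar G$ acting quadratically on $V$, which is the contradiction sought. The difficulty is that this cannot be read off from $\bar A$ alone: a faithful abelian $p$-group of linear transformations need not contain a quadratic element when $p\geq 3$ (a single large Jordan block is an example), so one must genuinely exploit the normality of $\bar A$ in $\bar P$ together with $p$-stability — precisely the circle of ideas surrounding the Thompson–Glauberman replacement theorems. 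Concretely I would pick an abelian subgroup of $P$ of maximal order with nontrivial image in $\bar G$ and with its intersection with $Q$ as large as possible, and use a replacement argument to exchange it for an abelian subgroup of the same order lying deeper in $Q$, contradicting that maximal choice; I would assemble this final step following Gorenstein's treatment \cite[Theorem 8.1.3]{Gor}. One also uses along the way the standard facts that $p$-stability is inherited by subgroups and by quotients.
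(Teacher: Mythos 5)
Your ``crucial implication''---that $[Q,g,g]=1$ forces $g\in Q$, where $Q=O_p(G)$---is exactly the engine of the paper's proof, but you then overlook that it already finishes the argument in one line, and instead you launch a reduction that you never complete. Since $A\trianglelefteq P$ and $Q\leq P$, the subgroup $Q$ normalizes $A$, so $[Q,A]\leq A$, and hence $[Q,A,A]\leq [A,A]=1$ because $A$ is abelian. Thus every $a\in A$ satisfies $[Q,a,a]=1$, and your implication gives $a\in Q$, i.e.\ $A\leq Q$. (The paper phrases the same thing without first identifying $O_p(G/C)$ elementwise: $p$-stability applied to the subgroup $Q$ yields $AC/C\leq O_p(G/C)$ with $C=C_G(Q)\leq Q$, and $O_p(G/C)=Q/C$.) Your worry that the contradiction ``cannot be read off from $\bar A$ alone'' because a faithful abelian $p$-group of linear maps need not contain quadratic elements is a red herring: you are not handed an arbitrary abelian subgroup, you are handed one normalized by $Q$, and $[Q,A]\leq A\cap Q$ together with commutativity of $A$ is precisely what makes the whole of $\bar A$ act quadratically on $V=Q$. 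No replacement theorem is needed.

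As written, therefore, your proposal has a genuine gap: the final step is only a plan (``use a replacement argument \dots following Gorenstein's treatment''), not an argument, and it is the step carrying all the content you have not supplied. The preliminary reductions ($O_{p'}(G)=1$, passage to $G/\Phi(Q)$, faithfulness of $\bar G$ on $V$) are all unnecessary, and one of them leans on the unproved assertion that $p$-stability in the paper's sense passes to the quotient $G/\Phi(Q)$; the paper only establishes such inheritance for $G/O_{p'}(G)$, and that already takes a careful separate lemma. The fix is not to repair the induction but to delete it and apply your own second sentence to the elements of $A$.
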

		
		\begin{proof}[\textbf{Proof}]
			Since $O_p(G)$ normalizes $A$, we see that $[O_p(G),A,A]=1$. Write $C=C_G(O_p(G))$. Then  we have $AC/C\leq O_p(G/C)$.  Note that $O_p(G/C)=O_p(G)/C $ since $C\leq O_p(G)$. It follows that $A\leq O_p(G)$.
		\end{proof}
		
		\begin{definition}\label{strongly closed set}
			Let $G$ be a group,  $P\in Syl_p(G)$ and  $D$ be a nonempty subset of $P$. We say that $D$  is a strongly closed subset in $P$ (with respect to $G$) if for all $U\subseteq D$ and $g\in G$ such that $U^g\subseteq P$, we have $U^g\subseteq D$. 
		\end{definition}
		
		\begin{lemman}\label{strogly closed}
			Let $G$ be a group and $P\in Syl_p(G)$.	Suppose that $D$ is a strongly closed subset in $P$. If $N\lhd G$ and $D\cap N$ is nonempty then $D\cap N$ is  also a strongly closed subset in $P$. Moreover, $G=N_G(D\cap N)N$.
		\end{lemman}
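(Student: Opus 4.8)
We first dispatch the claim that $D\cap N$ is strongly closed in $P$. The plan is simply to unwind Definition \ref{strongly closed set}: given $U\subseteq D\cap N$ and $g\in G$ with $U^g\subseteq P$, strong closure of $D$ gives $U^g\subseteq D$, while $U\subseteq N\lhd G$ forces $U^g\subseteq N^g=N$; hence $U^g\subseteq D\cap N$. (Note $D\cap N$ is indeed a subset of $P$, being contained in $P\cap N$.)

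For the factorization $G=N_G(D\cap N)N$, where $N_G(D\cap N)$ denotes the setwise stabilizer $\{g\in G\mid (D\cap N)^g=D\cap N\}$ since $D\cap N$ need not be a subgroup, the idea is a Frattini-style argument anchored at the Sylow $p$-subgroup $P\cap N$ of $N$. First note that $D\cap N\subseteq P\cap N$. Fix $g\in G$. Since $(P\cap N)^g=P^g\cap N$ is again a Sylow $p$-subgroup of $N$, Sylow's theorem applied inside $N$ supplies $n\in N$ with $(P\cap N)^{gn}=P\cap N$. Writing $h=gn\in gN$, we thus have $(P\cap N)^h=P\cap N$, and hence also $(P\cap N)^{h^{-1}}=P\cap N$.

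It remains to verify $h\in N_G(D\cap N)$. Conjugating the inclusion $D\cap N\subseteq P\cap N$ by $h$ gives $(D\cap N)^h\subseteq (P\cap N)^h=P\cap N\subseteq P$; since $D\cap N\subseteq D$ and $D$ is strongly closed in $P$, this yields $(D\cap N)^h\subseteq D$, and trivially $(D\cap N)^h\subseteq N$, so $(D\cap N)^h\subseteq D\cap N$. Running the identical argument with $h^{-1}$ in place of $h$ gives $(D\cap N)^{h^{-1}}\subseteq D\cap N$, equivalently $D\cap N\subseteq (D\cap N)^h$. Combining the two inclusions, $(D\cap N)^h=D\cap N$, so $g=hn^{-1}\in N_G(D\cap N)N$, as desired.

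The argument is essentially routine; the only points requiring care are that $D\cap N$ is merely a subset, so that all manipulations run with subsets and the setwise stabilizer rather than with honest normalizers, and that strong closure must be invoked for both $h$ and $h^{-1}$ in order to upgrade the inclusion $(D\cap N)^h\subseteq D\cap N$ to the equality we need.
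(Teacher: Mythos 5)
Your proof is correct and follows essentially the same route as the paper: the first part is the identical unwinding of the definition, and the second part is the Frattini argument $G=N_G(P\cap N)N$ (which you unpack via Sylow's theorem in $N$ rather than citing) followed by the observation that strong closure forces $N_G(P\cap N)$ to stabilize $D\cap N$ setwise. Your use of both $h$ and $h^{-1}$ to get equality is a slightly more explicit substitute for the paper's implicit finiteness/cardinality step, but the argument is the same.
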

		
		\begin{proof}[\textbf{Proof}]
			Let $Q=P\cap N$ and write $D^*=D\cap N$. Then we see that $Q\in Syl_p(N)$. Let $U\subseteq D^*$ and $g\in G$ such that $U^g\subseteq P$. It follows that $U^g\subseteq D$ as $U\subseteq D$ and $D$ is strongly closed in $G$. Since $N\lhd G$, we see that $U^g\leq N$ which yields that $U^g\subseteq N\cap D=D^*$ which shows the first part. 
			
			We already know that $G=N_G(Q)N$ by Frattini argument. Thus, it is enough to show that $N_G(Q)\leq N_G(D^*)$. Let $x\in N_G(Q)$. Then $D^{*^x}\subseteq Q\leq P$. Since $D^*$ is strongly closed in $P$, we see that $D^*{^x}= D^*$. It follows that $x\in N_G(D^*)$, as desired.
		\end{proof}
		
		\begin{lemman}\label{crucial lemma}
			Let $P$ be a $p$-group, $p$ be odd, and  let $B,N\unlhd P$. Suppose that $B$ is of class at most $2$ and $B'\leq A$ for all $A\in \mathcal A_x(N)$. Then there exists $A\in \mathcal  A_x(N)$ such that $B$ normalizes $A$ while $x\in \{o,r,e\}.$
		\end{lemman}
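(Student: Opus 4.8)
The plan is to let $B$ act by conjugation on the set $\mathcal{A}_x(N)$ and to extract a $B$-invariant member via an extremal argument powered by Theorem~\ref{A}. Since $B$ and $N$ are both normal in $P$, conjugation by any element of $B$ carries $N$ to itself and preserves the order, the rank, and the property of being elementary abelian of a subgroup of $N$; hence $B$ permutes $\mathcal{A}_x(N)$, and a member $A\in\mathcal{A}_x(N)$ is fixed by $B$ exactly when $B\leq N_P(A)$. So it suffices to produce one $A\in\mathcal{A}_x(N)$ with $B\leq N_P(A)$.

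Assume, for contradiction, that no member of $\mathcal{A}_x(N)$ is normalized by $B$, and among all $A\in\mathcal{A}_x(N)$ pick one for which $|A\cap B|$ is as large as possible. Since $A\leq N\leq P$ and $B\unlhd P$ we have $A\leq N_P(B)$, while $B\nleq N_P(A)$ by assumption; combined with $B'\leq A$ and $B$ of class at most $2$, this is precisely the setup of Theorem~\ref{A} applied with the ambient $p$-group taken to be $P$. Theorem~\ref{A} then provides an abelian subgroup $A^*\leq P$ with $|A^*|=|A|$, with $A\cap B<A^*\cap B$, with $A^*\leq N_P(A)\cap A^P$, with the exponent of $A^*$ dividing that of $A$, and with $rank(A)\leq rank(A^*)$. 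As $A\leq N\unlhd P$, every $P$-conjugate of $A$ lies in $N$, so $A^P\leq N$ and hence $A^*\leq N$.

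Next I would check that $A^*$ again lies in $\mathcal{A}_x(N)$, case by case, using only parts (a) and (d) of Theorem~\ref{A}. If $x=o$: $A^*$ is an abelian subgroup of $N$ with $|A^*|=|A|=d_o(N)$, so $A^*\in\mathcal{A}_o(N)$. If $x=e$: $A$ is elementary abelian, so the exponent of $A^*$ divides $p$, whence $A^*$ is elementary abelian of order $|A|=d_e(N)$, so $A^*\in\mathcal{A}_e(N)$. If $x=r$: $rank(A^*)\geq rank(A)=d_r(N)$, and since $d_r(N)$ is the maximal rank of an abelian subgroup of $N$, this forces $rank(A^*)=d_r(N)$, so $A^*\in\mathcal{A}_r(N)$. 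In all three cases $A^*\in\mathcal{A}_x(N)$ with $|A^*\cap B|>|A\cap B|$, contradicting the choice of $A$. Therefore some $A\in\mathcal{A}_x(N)$ satisfies $B\leq N_P(A)$, which is the assertion.

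The main point — and really the only subtlety — is that Theorem~\ref{A} must be invoked in the form where $A$ is \emph{not} assumed to be of maximal order, since for $x\in\{r,e\}$ the relevant extremal member of $\mathcal{A}_x(N)$ need not have maximal order in $P$; this is exactly the added flexibility over \cite[Theorem~4.1]{Gla} that Theorem~\ref{A} was designed to supply, so I expect no genuine obstacle beyond carefully matching the hypotheses of Theorem~\ref{A} and carrying out the short case-check above.
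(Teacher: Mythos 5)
Your proposal is correct and follows essentially the same route as the paper: choose $A\in\mathcal A_x(N)$ maximizing $|A\cap B|$, apply Theorem~\ref{A} inside the ambient $p$-group $P$ (using $B\unlhd P$ to get $A\leq N_P(B)$ and $N\unlhd P$ to get $A^*\leq A^P\leq N$), and use parts (a) and (d) to see that $A^*$ stays in $\mathcal A_x(N)$ in each of the three cases, contradicting maximality. The hypothesis-matching and the case-check are exactly as in the paper's argument, so there is nothing to add.
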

		
		\begin{proof}[\textbf{Proof}]
			First suppose that $x=e$. Now choose $A\in \mathcal A_e(N)$ such that $A\cap B$ is maximum possible. If $B$ does not normalize $A$ then there exists an  abelian subgroup $A^*\leq P$ such that $|A^*|=|A|$, $A^*\leq A^P\cap N_P(A)$, $A^*\cap B>A\cap B$ and the exponent of $A^*$ divides that of $A$ by Theorem A. We first observe that $A^*$ is an elementary abelian subgroup as the exponent of $A$ is $p$. Since $A\leq N\lhd P$, we see that $A^*\leq A^P\leq N$. Hence, $A^*\in \mathcal A_e(N)$ which contradicts to the maximality of $A\cap B$. Thus $B$ normalizes $A$ as desired.
			
			Now suppose that $x=r$ and let $ A\in \mathcal A_r(N)$. Then we apply Theorem A in a similar way and find $A^*\leq N $ with $rank(A^*)\geq rank(A)$. Since the rank of $A$ is maximal possible in $N$, we see that $A^*\in \mathcal A_r(N)$. The rest of the argument follows similarly. The case $x=o$ also  follows in a similar fashion.
		\end{proof}
		
		\begin{theoremn}\label{maim thm}
			Let $p$ be an odd prime, $G$ be a $p$-stable group, and $P\in Syl_p(G)$. Let $D$ be a strongly closed subset in $P$ and $B$ be a normal $p$-subgroup of $G$. Write $K=\langle D \rangle$, $Z_o=Z(J_o(K)), \  Z_r=\Omega(Z(J_r(K))) \textit{ and  } Z_e=\Omega(Z(J_e(K)))$. If all members of $\mathcal A_x(K)$ are included in the set $D$ then $Z_x\cap B\lhd G$ while $x\in \{o,r,e\}$. 
		\end{theoremn}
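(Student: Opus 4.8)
The plan is to adapt Glauberman's proof of the $ZJ$-theorem, using Theorem~\ref{A} (iterated) in place of the classical replacement theorem and carrying the set $D$ as bookkeeping. Fix $x\in\{o,r,e\}$, put $W=Z_x\cap B$, and induct on $|G|$, refining by $|B|$ where needed; we may assume $B\neq 1\neq W$. The first, essentially free, observation is that $W\unlhd P$: since $D$ is strongly closed in $P$, every $g\in P$ gives $D^g\subseteq P$ and hence $D^g=D$, so $P$ normalizes $D$ and $K=\gen{D}\unlhd P$; therefore $J_x(K)$ is characteristic in $K$ and $Z_x$ is characteristic in $J_x(K)$, so both are normal in $P$, and $W=Z_x\cap B\unlhd P$ because $B\unlhd G$. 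Thus $P\le N_G(W)$, and what remains is to produce the rest of $G$; the device for assembling $G$ is Lemma~\ref{strogly closed}, which for any $N\unlhd G$ with $D\cap N\neq\varnothing$ gives that $D\cap N$ is again strongly closed and $G=N_G(D\cap N)\,N$, so an inductive conclusion obtained inside $N_G(D\cap N)$ can be combined with the action of $N$.

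The engine of the argument is that Theorem~\ref{A}, together with $p$-stability, forces abelian subgroups into $O_p$ of a quotient --- the same mechanism underlying Lemma~\ref{opg}. Suppose $A\in\mathcal A_x(K)$ is normalized by $B$. As $A$ is abelian and $B\unlhd G$, every commutator $[a,b]$ with $a\in A$, $b\in B$ lies in $A\cap B$, whence $[B,A]\le A\cap B\le A$ and so $[B,A,A]\le[A,A]=1$; that is, $A$ acts quadratically on $B$. Since $N_G(B)=G$, $p$-stability then gives $A\,C_G(B)/C_G(B)\le O_p\big(G/C_G(B)\big)$, i.e. $A\le T$, where $T\unlhd G$ is the preimage of $O_p(G/C_G(B))$. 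To arrange the hypothesis ``$B$ normalizes $A$'' I would iterate Theorem~\ref{A}: starting from $A\in\mathcal A_x(K)$ with $B$ of class at most $2$ and $B'\le A$, if $B$ fails to normalize $A$ we get $A^*\le A^G\cap N_G(A)$ with $A\cap B<A^*\cap B$, $|A^*|=|A|$, the exponent of $A^*$ dividing that of $A$, and $rank(A)\le rank(A^*)$; the replacement stays inside $K\unlhd P$ and preserves the invariant defining $\mathcal A_x(K)$ (here Lemma~\ref{rank lemma} is what secures the $x=r$ case), so $A^*\in\mathcal A_x(K)\subseteq D$ again, and $B'\le A\cap B\le A^*$ preserves the hypotheses --- the chain being finite because $|A^{(i)}\cap B|$ strictly increases --- so we reach $A^{(k)}\in\mathcal A_x(K)$ normalized by $B$ and hence lying in $T$. (This is the content of Lemma~\ref{crucial lemma} in the cases where its hypotheses hold.)

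A Glauberman-style dichotomy should then finish the proof: either every member of $\mathcal A_x(K)$ already lies in $T$, whence $J_x(K)\le T$ and so $Z_x\cap B\le T\cap B\unlhd G$, which together with $W\unlhd P$ and the factorization $G=N_G(D\cap B)\,B$ of Lemma~\ref{strogly closed} yields $W\unlhd G$; or some member does not, and then the mechanism above produces a replaced subgroup inside $T$ whose intersection with $B$ is strictly larger, contradicting a maximal choice in a minimal counterexample. The step I expect to be the real obstacle is the reduction needed to set this in motion: $B$ is an \emph{arbitrary} normal $p$-subgroup, so one must first pass to a configuration in which $B$ has class at most $2$ with $B'$ inside the members of $\mathcal A_x(K)$ --- for $x=o$ one has $Z(J_o(K))\le A$ for every $A\in\mathcal A_o(K)$, so it is enough to force $B'\le Z_o$, and there a reduction such as replacing $B$ by $\gen{W^G}$ (which leaves $Z_x\cap B$ unchanged) combined with the induction on $|B|$ must do the work --- while keeping $C_G(B)$, hence $T$, under control. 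Because Thompson subgroups behave badly under quotients one cannot simply work in $G/N$; the hypothesis $\mathcal A_x(K)\subseteq D$ and the strong closure of $D$ are exactly what let the replacement and the normal-subgroup bookkeeping transport cleanly, and organizing the induction around them is where the care lies.
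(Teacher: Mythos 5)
Your overall strategy is the right one --- minimal counterexample with $B$ minimal, the reduction to $B=(Z_x\cap B)^G$, iterated replacement via Theorem~\ref{A} to find a member of $\mathcal A_x(K)$ normalized by $B$, $p$-stability to push such a member into $O_p$ of a quotient, and a Frattini-type factorization from Lemma~\ref{strogly closed} to reassemble $G$ --- and all of these ingredients appear in the paper's proof. But the assembly has genuine gaps. First, the reduction you yourself flag as ``the real obstacle'' is not optional and you do not supply it: one needs $B$ of class at most $2$ with $B'\le Z_x\le A$ for every $A\in\mathcal A_x(K)$ before Theorem~\ref{A} or Lemma~\ref{crucial lemma} can be invoked at all. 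The paper gets this quickly but nontrivially: minimality of $B$ forces $B=(Z_x\cap B)^G$ and also $Z_x\cap B'\lhd G$ (since $B'<B$); then $[Z_x\cap B,B]\le Z_x\cap B'$, and conjugating over $G$ gives $B'=[(Z_x\cap B)^G,B]\le Z_x\cap B'\le Z_x$, whence $[B,B']=1$. Without this computation the argument cannot start.

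Second, your dichotomy is organized around the wrong subgroup, and its first branch is a non sequitur: from $J_x(K)\le T$ (with $T$ the preimage of $O_p(G/C_G(B))$) you conclude $Z_x\cap B\le T\cap B\unlhd G$ and then ``$W\unlhd G$'' via $G=N_G(D\cap B)B$, but containment in a normal subgroup is not normality, and $N_G(D\cap B)$ has no reason to normalize $Z_x\cap B$. The paper instead takes $N$ to be the \emph{largest normal subgroup of $G$ normalizing $Z_x\cap B$} and sets $K^*=\langle D\cap N\rangle$; then $G=N_G(K^*)N$ by Lemma~\ref{strogly closed}, and the working dichotomy is whether $J_x(K)\le K^*$. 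In the affirmative case $J_x(K)=J_x(K^*)$ is characteristic in $K^*$, so $N_G(K^*)$ normalizes $Z_x$ and hence $Z_x\cap B$, and the factorization finishes. This choice of $N$ also does the job your $T$ cannot: maximality of $N$ forces $O_p(G/N)=1$, which is what converts the $p$-stability conclusion $AN/N\le O_p(G/N)$ into $A\le N$, hence $A\subseteq D\cap N\subseteq K^*$. Finally, in the remaining branch the maximality of $A_1\cap B$ among members of $\mathcal A_x(K)$ outside $K^*$ is not contradicted by the replacement --- it is \emph{used} to place the replaced subgroup $A^*$ inside $K^*$ --- and the actual contradiction is the chain $B=(Z_x\cap B)^{N_G(X)}\le X=\Omega(Z(J_x(K^*)))\le A^*\le N_P(A_1)$ against the fact that $B$ does not normalize $A_1$. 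None of these three steps is routine, so as it stands the proposal is a correct outline of the strategy rather than a proof.
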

		
		\begin{proof}[\textbf{Proof}]
			Write $J(X)=J_e(X)$ for any $p$-subgroup $X$ and set $Z=Z_e$. We can clearly assume that $B\neq 1$. Let $G$ be a counter example, and choose $B$ to be the smallest possible normal $p$-subgroup contradicting to the theorem. Notice that $K\unlhd P$ as $D$ is a normal subset of $P$, and so $Z\unlhd P$. In particular, $B$ normalizes $Z$.
			
			Set $B_1=(Z\cap B)^G$. Clearly $B_1\leq B$. Suppose that $B_1<B$. By our choice of $B$, we get $Z\cap B_1\lhd G$. Since $Z\cap B\leq B_1$, we have $Z\cap B\leq Z\cap B_1\leq Z\cap B$, and hence $Z\cap B= Z\cap B_1$. This contradiction shows that $B=B_1=(Z\cap B)^G$.
			
			Clearly $B'<B$, and hence $Z\cap B' \lhd G$ by our choice of $B$. Since $Z$ and $B$ normalize each other, $[Z\cap B,B]\leq Z\cap B'$. Since $B$ and $Z\cap B'$ are both normal subgroups of $G$, we obtain $[(Z\cap B)^g,B]\leq Z\cap B'$ for all $g\in G$. This yields  $[(Z\cap B)^G,B]=[B,B]=B'\leq Z\cap B'.$ In particular, we have $B'\leq Z$, and so $[ Z\cap B, B']=1$. It follows that $[B,B']=1$ as $B=(Z\cap B )^G$. As a consequence, we see that $B$ is of class at most $2$. Notice that $Z\leq A$ for all $A\in  \mathcal A_e(K)$ due to the fact that $AZ$ is  an elementary abelian subgroup of $K$. Thus we see that, in particular, $B'\leq A$ for all $A\in \mathcal A_e(K).$
			
			Let $N$ be the largest normal subgroup of $G$ that normalizes $Z\cap B$. Set $D^*=D\cap N$, which is nonempty by our hypothesis, and write $K^*=\langle D^* \rangle$. We see that $G=N_G(D^*)N$ by Lemma \ref{strogly closed}, and so $G=N_G(K^*)N$.  It follows that $G=N_G(J(K^*))N$ since $J(K^*)$ is a characteristic subgroup of $K^*$. Suppose that $J(K)\leq K^*$. Then we see that  $J(K)=J(K^*)$, and hence $Z\cap B$ is normalized by $N_G(J(K^*))$. It follows that $Z\cap B\lhd G$. Thus we may assume that $J(K)\nsubseteq K^*.$
			
			There exists $A\in \mathcal A_e(K)$ such that $B$ normalizes $A$ by Lemma \ref{crucial lemma}. Hence, $[B,A,A]=1$ since $[B,A]\leq A$. Since $G$ is $p$-stable and $B\lhd G$, we have that $AC/C\leq O_p(G/C)$ where $C=C_G(B)$. Note that $C$ normalizes $Z\cap B$, and so $C\leq N$ by the choice of $N$. It follows that $AN/N\leq O_p(G/N).$ Now we claim that $O_p(G/N)=1$. Let $L\lhd G$ such that $L/N=O_p(G/N)$. Then $L=(L\cap P)N$, and hence $L$ normalizes $Z\cap B$ as both $N$ and $L\cap P$ normalize $Z\cap B$. The maximality of $N$ forces that $N=L$, which yields that $A\leq N$. Note that   $A\subseteq D$ by hypothesis, and so $A\subseteq N\cap D=D^*\subseteq K^*$.
			
			We see that $Z\leq A\leq J(K^*)$, and so we have $J(K^*)\leq J(K)$. It follows that $Z \cap B\leq Z\leq \Omega(Z(J(K^*)))$. Set $X=\Omega(Z(J(K^*)))$. Then we see that $G=NN_G(X)$ since $G=NN_G(K^*)$ and $X$ is characteristic in $K^*$. Since $N$ normalizes $Z\cap B$, each distinct conjugate of $Z\cap B$ comes via an element of $N_G(X).$ Thus, $B=(Z\cap B)^G=(Z\cap B)^{N_G(X)}\leq X$. 
			
			Since $J(K)\nsubseteq K^*$, some members of  $\mathcal A_e(K)$  do not lie in $ K^*$. Among such members choose $ A_1 \in \mathcal A_e(K)$ such that $A_1\cap B$ is maximum possible. Note that $B$ does not normalize $A_1$, since otherwise this forces $A_1\leq K^*$ as in previous paragraphs. Then there exists $A^*\leq P$ such that $|A^*|=|A|$, $A^*\cap B>A_1\cap B$, $A^*\leq A_1^P\cap N_P(A_1)$ and the exponent of $A^*$ divides the exponent of $A_1$ by Theorem A. Since $A_1$ is elementary abelian, we see that $A^*$ is also elementary abelian. Moreover, $A^*\leq K$ as $A_1^P\leq K\lhd P$. It follows that $A^*\in \mathcal A_e(K)$, and so $A^*\leq K^*$ due to the choice of $A_1$. We see that $XA^*$ is a group and $A^*\in \mathcal A_e(K^*)$, and hence  $B\leq X\leq A^*$. It follows that $B\leq A^*\leq N_P(A_1)$, which is the final contradiction. 
			Thus, our proof is complete for $Z_e$. Almost the same proof works for $Z_r$ and $Z_o$ without any difficulty.
		\end{proof}
		When we work with $J_o(K)$, we do not  need to use $\Omega$ operation due to the fact that $Z(J_o(K))\leq A$ for all $A\in \mathcal A_o(K)$. However, this does not need to be satisfied for $Z(J_e(K))$ and $Z(J_r(K))$. In these cases, however, the rank conditions force that $\Omega(Z(J_x(K)))\leq A$ for all $ A\in \mathcal A_{x}(K)$ for $x\in  \{e,r\}$. This difference causes the use of $\Omega$ operation necessary for $Z(J_e(K))$ and $Z(J_r(K))$.

		\begin{proof}[\textbf{Proof of Theorem \ref{B}}]
			As in our hypothesis, let $G$ be a $p$-stable group that $C_G(O_p(G))\leq O_p(G)$ and $D$ be a strongly closed subgroup in $P$. Since all these subgroups $Z(J_o(D)), \ \Omega(Z(J_r(D)))$  and  $\Omega(Z(J_e(D)))$ are abelian normal subgroups of $G$, we see that they must lie in $O_p(G)$ by Lemma \ref{opg}. Note that $D$ is also a strongly closed subset in $P$ and satisfies the hypothesis of Theorem \ref{maim thm}. Then the results follow from Theorem \ref{maim thm}.
		\end{proof}

	In this section, we see another application of Theorem \ref{maim thm} by proving the following theorem, which we shall need in the next section.
		\begin{theoremn}\label{main thm2}
		Let $p$ be an odd prime, $G$ be a $p$-stable and $p$-constrained group, and $P\in Syl_p(G)$. Let $D$ be a strongly closed subset in $P$. Write $K=\langle D \rangle$, $Z_o=Z(J_o(K)), \  Z_r=\Omega(Z(J_r(K))) \textit{ and  } Z_e=\Omega(Z(J_e(K)))$. If all members of $\mathcal A_x(K)$ are included in the set $D$, then the normalizer of $Z_x(K)$ controls strong $G$-fusion in $P$ while $x\in \{o,r,e\}$.
	\end{theoremn}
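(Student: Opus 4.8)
The plan is to reduce, in a single step, to the case $O_{p'}(G)=1$, in which the conclusion just says that $Z_x$ is already normal in $G$, and is extracted from Theorem~\ref{maim thm} and Lemma~\ref{opg}. First note that, since $D$ is strongly closed in $P$, it is $P$-invariant, so $K=\langle D\rangle\lhd P$; hence each of $Z_o=Z(J_o(K))$, $Z_r=\Omega(Z(J_r(K)))$ and $Z_e=\Omega(Z(J_e(K)))$ is characteristic in $K$ and therefore an abelian normal subgroup of $P$. In particular $P\le N_G(Z_x)$, so that ``$N_G(Z_x)$ controls strong $G$-fusion in $P$'' is meaningful, and it holds trivially once $Z_x\lhd G$.

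Assume first that $O_{p'}(G)=1$. Then $p$-constraint forces $C_G(O_p(G))\le O_p(G)$, so Lemma~\ref{opg}, applied to the abelian normal subgroup $Z_x$ of $P$, gives $Z_x\le O_p(G)$. Now Theorem~\ref{maim thm} with the normal $p$-subgroup $B=O_p(G)$ — whose hypotheses hold, as $G$ is $p$-stable, $D$ is a strongly closed subset in $P$, and $\mathcal A_x(K)\subseteq D$ by assumption — yields $Z_x=Z_x\cap O_p(G)\lhd G$, so $N_G(Z_x)=G$ and there is nothing more to prove. In general, put $L=O_{p'}(G)$ and pass to $\bar G=G/L$. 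One checks that $\bar G$ is again $p$-stable and $p$-constrained with $O_{p'}(\bar G)=1$, that (using $P\cap L=1$) the image $\bar D$ of $D$ is a strongly closed subset of $\bar P\cong P$ with respect to $\bar G$, that $\bar K\cong K$ with $\mathcal A_x(\bar K)$ the image of $\mathcal A_x(K)$, hence contained in $\bar D$, and that the subgroup $\bar Z_x$ formed in $\bar G$ is the image of $Z_x$. The case already treated, applied to $\bar G$, gives $\bar Z_x\lhd\bar G$, that is, $Z_xL\lhd G$; since $Z_x$ is a Sylow $p$-subgroup of $Z_xL$, the Frattini argument gives $G=L\,N_G(Z_x)$, and $P\le N_G(Z_x)$. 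In particular the image of $N_G(Z_x)$ in $\bar G$ is all of $\bar G$, and since factoring out $O_{p'}$ does not change the fusion in a Sylow $p$-subgroup, the $G$-fusion in $P$ coincides with the $N_G(Z_x)$-fusion in $P$; equivalently, $N_G(Z_x)$ controls strong $G$-fusion in $P$.

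The substance here is entirely in Theorem~\ref{maim thm}; the rest is bookkeeping around $O_{p'}(G)$, and the step I expect to need the most care is verifying that the hypotheses ``$D$ strongly closed in $P$'' and ``$\mathcal A_x(K)\subseteq D$'' survive the passage to $\bar G$. The delicate feature is that $D$ is only a \emph{subset} of $P$, so one must follow individual $p$-elements through the quotient map: a $p$-subgroup of $PL$ is always $L$-conjugate into $P$, and after such an adjustment strong closure of $D$ in $P$ can be applied. One must likewise make precise, via the usual coprime-action arguments and $P\cap L=1$, that control of strong fusion in $P$ is insensitive to $O_{p'}(G)$; both are routine, but this is where the proof does its real work.
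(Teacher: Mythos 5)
Your proposal is correct and takes essentially the same route as the paper's proof: pass to $\overline G=G/O_{p'}(G)$, use $p$-constraint to get $C_{\overline G}(O_p(\overline G))\leq O_p(\overline G)$, apply Lemma \ref{opg} and Theorem \ref{maim thm} there to conclude that the image of $Z_x$ is normal in $\overline G$, and transfer back via $G=O_{p'}(G)N_G(Z_x)$. The points you defer with ``one checks'' are precisely the paper's Lemma \ref{p-stable} (that $p$-stability passes to $G/O_{p'}(G)$, which does require an actual argument with the definition of $p$-stability used here), Lemma \ref{strongly closed2}(b) (strong closure passes to quotients), and \cite[Lemma 7.1]{Gla} for the final control-of-fusion step.
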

We need the following lemma in the proof of Theorem \ref{main thm2}.

\begin{lemman}\cite[Lemma 7.2]{Gla}\label{p-stable}
	If $G$ is a $p$-stable group, then $G/O_{p'}(G)$ is also $p$-stable.
\end{lemman}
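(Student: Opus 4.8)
The plan is to verify the definition of $p$-stability for $\bar G := G/O_{p'}(G)$ directly, by lifting the relevant data back to $G$ and applying the $p$-stability of $G$. Write $N = O_{p'}(G)$, let $\bar P$ be an arbitrary $p$-subgroup of $\bar G$, and let $\bar g \in N_{\bar G}(\bar P)$ satisfy $[\bar P, \bar g, \bar g] = 1$; the goal is to show that $\bar g\, C_{\bar G}(\bar P) \in O_p(N_{\bar G}(\bar P)/C_{\bar G}(\bar P))$.

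First I would realize $\bar P$ faithfully inside $G$. Let $H$ be the full preimage of $\bar P$ in $G$, so $N \lhd H$ and $H/N = \bar P$. Since $|N|$ is prime to $p$ while $H/N$ is a $p$-group, Schur--Zassenhaus provides a complement $P$, which is a $p$-subgroup of $G$ with $H = PN$ and $P \cap N = 1$; the map $\theta : P \to \bar P$, $x \mapsto xN$, is then an isomorphism.

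The core step is to compare automizers. I would prove that $x \mapsto xN$ defines a surjection $N_G(P) \to N_{\bar G}(\bar P)$ inducing an isomorphism $\Theta : N_G(P)/C_G(P) \to N_{\bar G}(\bar P)/C_{\bar G}(\bar P)$ that is compatible with $\theta$, meaning that the conjugation action of $x\,C_G(P)$ on $P$ corresponds under $\theta$ to the action of $\Theta(x\,C_G(P))$ on $\bar P$. Surjectivity comes from a Frattini argument: the preimage of $N_{\bar G}(\bar P)$ is exactly $N_G(H)$, and since $P \in Syl_p(H)$ with $H \lhd N_G(H)$ we get $N_G(H) = H\,(N_G(H)\cap N_G(P))$; as $\bar P = H/N$ is itself the image of $P \le N_G(P)$, the image of $N_G(P)$ already contains both factors, hence all of $N_{\bar G}(\bar P)$. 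That $C_G(P)$ maps into $C_{\bar G}(\bar P)$ and that $\Theta$ is injective and $\theta$-compatible then follow routinely from the fact that $\theta$ intertwines conjugation, using $P \cap N = 1$.

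Granting the automizer isomorphism, the conclusion transports formally. I choose a lift $g \in N_G(P)$ of $\bar g$ by surjectivity. Since $\theta$ intertwines the actions, one has $[\bar P, \bar g, \bar g] = \theta([P, g, g])$, so the hypothesis $[\bar P, \bar g, \bar g] = 1$ is equivalent to $[P, g, g] = 1$. The $p$-stability of $G$ then yields $g\,C_G(P) \in O_p(N_G(P)/C_G(P))$, and since an isomorphism carries $O_p$ onto $O_p$ and $\Theta(g\,C_G(P)) = \bar g\, C_{\bar G}(\bar P)$, the required membership follows. I expect the only real obstacle to be the automizer comparison of the previous paragraph --- especially surjectivity of $N_G(P) \to N_{\bar G}(\bar P)$, which is precisely where Schur--Zassenhaus and the Frattini argument enter; once this rigidity of the $p$-local structure under the $O_{p'}$-quotient is established, both the quadratic hypothesis and the $O_p$-conclusion are purely formal consequences.
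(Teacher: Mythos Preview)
Your proof is correct and is organized more structurally than the paper's. The paper first lifts $\bar P$ to a $p$-subgroup $U$ of $G$, then decomposes the given element $\bar x$ into its commuting $p$-part $\bar{x_1}$ and $p'$-part $\bar{x_2}$; the $p'$-part is disposed of by the coprime quadratic-action lemma (a $p'$-element with $[\bar U,\bar{x_2},\bar{x_2}]=1$ must centralize $\bar U$), and the $p$-part is lifted to a $p$-element $s\in G$, which is shown to normalize some $N$-conjugate of $U$ by a fixed-point count on $Syl_p(UN)$; $p$-stability of $G$ then applies, and the conclusion is pushed back down via the identifications $N_{\bar G}(\bar U)=\overline{N_G(U)}$ and $C_{\bar G}(\bar U)=\overline{C_G(U)}$. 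You instead package all of this into a single automizer isomorphism $N_G(P)/C_G(P)\cong N_{\bar G}(\bar P)/C_{\bar G}(\bar P)$, established once via Schur--Zassenhaus and the Frattini argument, so that both the quadratic hypothesis and the $O_p$-conclusion transport formally, with no need to separate $p$- and $p'$-parts or to argue with Sylow orbits. The paper's route is more hands-on and avoids stating the full automizer isomorphism, while yours makes the invariance of the $p$-local structure under the $O_{p'}$-quotient explicit and thereby shortens the endgame.
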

Since the $p$-stability definition we used here is not same with that of \cite{Gla} and \cite[Lemma 7.2]{Gla} has also extra assumption that $O_p(G)\neq 1$, it is appropriate to give a proof of this lemma here.

\begin{proof}[\textbf{Proof}]
	Write $N=O_{p'}(G)$ and $\overline G=G/N$. Let $V$ be $p$-subgroup of $\overline G$. Then there exists a $p$-subgroup $U$ of $G$ such that $\overline U=V$.

Let $\overline x \in N_{\overline G}(\overline U)$ such that $[\overline U, \overline x, \overline x ]=\overline 1$. Clearly, we can write $\overline x=\overline x_1 \overline x_2$ such that $\overline x_1$ is a $p$-element, $\overline x_2$ is a $p'$-element  and $[\overline x_1,\overline x_2]=\overline 1$ for some $x_1,x_2\in G$. 
It follows that  $[\overline U, \overline x_i, \overline x_i ]=\overline 1$ for $i=1,2$. Then we see that $\overline x_2\in C_{\overline G}(\overline U)$ by \cite[Lemma 4.29]{Isc}. Thus, it is enough to show that $\overline x_1 \in O_p(N_{\overline G}(\overline U)/C_{\overline G}(\overline U))$ to finish the proof.

Since $\overline x_1$ is a $p$-element of $\overline G$, $x_1=sn$ where $n\in N$ and $s$ is a $p$-element of $G$, which yields that $\overline x_1=\overline s$. Then we see that $[UN,s,s]\in N$ and $s\in N_G(UN)$ by the previous paragraph. Note that $U\in Syl_p(UN)$ and $| Syl_p(UN)|$ is a $p'$-number. Consider the action of $\langle s \rangle $ on $Syl_p(UN)$. Then we observe that $s$ normalizes $U^n$ for some $n\in N$. Thus, we get that $[U^n,s,s]\leq U^n\cap N=1$. Note that $\overline U=\overline {U^n}$, and so we take $U^n=U$ without loss of generality.

Let $K\leq N_G(U)$ such that $K/C_G(U)=O_p(N_G(U)/C_G(U))$. Thus we observe that $s\in K$ as $G$ is $p$-stable. Note that $N_{\overline G}(\overline U)=\overline {N_G(U)}$ and $C_{\overline G}(\overline U)=\overline {C_G(U)}$ by \cite[Lemma 7.7]{Isc}. Hence, we see that $\overline x_1=\overline s \in \overline K$ and $\overline K/\overline {C_G(U)}\leq O_p(\overline {N_G(U)}/\overline {C_G(U)})=O_p(N_{\overline G}(\overline U)/C_{\overline G}(\overline U))$, which completes the proof.
\end{proof}

	\begin{proof}[\textbf{Proof of Theorem \ref{main thm2}}]
		Write $\overline G=G/O_{p'}(G)$. Then $\overline G$ is $p$-stable by Lemma \ref{p-stable}. Since $G$ is $p$-constrained, we have $C_{\overline G}(O_{p}(\overline G))\leq O_p(\overline G)$ by \cite[Theorem 1.1(ii)]{Gor}. Note that $Z_x\leq O_p(\overline G)$ by Lemma \ref{opg} for $x\in \{o,e,r\}$. We see that $\overline G$ satisfies the hypotheses of Theorem \ref{maim thm} as $\overline P$ is isomorphic to $P$ and $\overline D$ is the desired strongly closed set in $\overline P$. It follows that $Z_x(\overline K)\lhd \overline G$ by Theorem \ref{maim thm}, and so we get $G=O_{p'}(G)N_G(Z_x(K))$ for $x\in \{o,e,r\}$. Hence, $N_G(Z_x(K))$ controls strong $G$-fusion in $P$ by \cite[Lemma 7.1]{Gla} for $x\in \{o,e,r\}$.
	\end{proof}
			
		\section{The Proofs of Theorems \ref{E}, \ref{F} and \ref{H} }
		\begin{lemman}\label{strongly closed2}
			Let $P\in Syl_p(G)$ and $D$ be a strongly closed subset in $P$. Let $H\leq G$, $N\lhd G$ and $g\in G$ such that $P^g\cap H\in Syl_p(H)$. Then
			\begin{enumerate}[label=(\alph*)]
				\item\textit{ $D^g\cap H$ is strongly closed in $P^g\cap H$ with respect to $H$ if $D^g\cap H$ is nonempty.}
				\item  \textit{$DN/N$ is strongly closed in $PN/N$ with respect to $G/N$.}
			\end{enumerate}
		\end{lemman}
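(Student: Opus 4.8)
\emph{Proof plan.} The plan is to reduce strong closure to an element-wise condition and then move points around by explicit conjugations. First I would record two elementary facts used throughout. A strongly closed subset $D$ of $P$ is automatically normalized by $P$: if $x\in P$ and $U\subseteq D$, then $U^x\subseteq P$, so $U^x\subseteq D$ by strong closure, and a cardinality count gives $D^x=D$. Also, $D$ is strongly closed in $P$ with respect to $G$ if and only if $d^g\in P$ implies $d^g\in D$ for every $d\in D$ and $g\in G$: one direction is the singleton case of the definition, the other is immediate. I will use this element-wise reformulation freely.

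For part (a), set $Q=P^g\cap H$ and $D^*=D^g\cap H$; since $Q\in Syl_p(H)$ by hypothesis and $D^*\subseteq Q$, the assertion is meaningful once $D^*$ is nonempty. Given $U\subseteq D^*$ and $h\in H$ with $U^h\subseteq Q$, the idea is to conjugate the inclusion $U^{g^{-1}}\subseteq D$ by $k=ghg^{-1}\in G$: as $U^h\subseteq P^g$, one computes $(U^{g^{-1}})^k=U^{hg^{-1}}\subseteq P$, so strong closure of $D$ in $P$ forces $U^{hg^{-1}}\subseteq D$, that is, $U^h\subseteq D^g$. Since $U\subseteq H$ and $h\in H$ also give $U^h\subseteq H$, we conclude $U^h\subseteq D^g\cap H=D^*$. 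Here the Sylow hypothesis is used only to ensure the conclusion makes sense.

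For part (b), observe first that the image of a Sylow $p$-subgroup is Sylow, so $PN/N\in Syl_p(G/N)$ and $DN/N$ is a nonempty subset of it; hence the statement makes sense. The one substantive point is that $P$ is itself a Sylow $p$-subgroup of $PN$, which holds because $P\cap N\in Syl_p(N)$. I would then argue element-wise: take $\bar u$ in $DN/N$ and $\bar g\in G/N$ with $\bar u^{\bar g}\in PN/N$, and pick $u\in D$, $g\in G$ with $uN=\bar u$ and $gN=\bar g$, so that $u^g\in PN$. As $u^g$ is a $p$-element of $PN$, the $p$-subgroup $\langle u^g\rangle$ lies in a Sylow $p$-subgroup of $PN$, necessarily of the form $P^c$ with $c\in PN$; writing $c=xn$ with $x\in P$ and $n\in N$ yields $u^{gc^{-1}}\in P$, hence $u^{gc^{-1}}\in D$ by strong closure, then $u^{gn^{-1}}\in D^x=D$ by the $P$-invariance of $D$, and finally $u^g\in D^n\subseteq DN$ since $N\lhd G$. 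Thus $\bar u^{\bar g}=u^gN\in DN/N$, and as $\bar u$ was arbitrary, $(DN/N)^{\bar g}\subseteq DN/N$.

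The main obstacle — essentially the only step with content — is the passage in (b) from ``$u^g$ lies in the possibly larger group $PN$'' to ``some $G$-conjugate of $u$ lies in $P$'', and the key to it is the remark that $P\in Syl_p(PN)$; once that is available, strong closure of $D$ in $P$ finishes the job. Everything else in both parts is routine bookkeeping with conjugation and normality.
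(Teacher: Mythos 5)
Your proposal is correct and follows essentially the same route as the paper: in (a) you reduce to the strong closure of $D^g$ in $P^g$ (the paper invokes this directly, you unpack the conjugation), and in (b) the key step is identical — use $P\in Syl_p(PN)$ to conjugate $u^g$ into some $P^c$ with $c=xn\in PN$, apply strong closure, and absorb the $P$-part via $D^x=D$ so that $u^g\in D^n\subseteq DN$. The only cosmetic difference is your element-wise reformulation of strong closure, which is a valid and harmless reduction.
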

		\begin{proof}[\textbf{Proof}]
			$(a)$	Let $U\subseteq D^g\cap H$ and $h\in H$ such that $U^h\subseteq P^g\cap H$. Since $U\subseteq D^g$ and $U^h\subseteq P^g$, we see that $U^h\subseteq D^g$ as $D^g$ is strongly closed in $P^g$ with respect to $G$. Thus, $U^h\subseteq D^g\cap H$ as $U^h\subseteq H$.
			
			$(b)$ Let $U/N\subseteq DN/N$ and suppose that $(U/N)^y\subseteq PN/N$ for some $y\in G$. By an easy argument, we can find $V\subseteq D$ such that $U/N=VN/N$.
			
			Then we see that $VN\subseteq DN$ and $(VN)^y=V^yN\subseteq PN$. We need to show that $V^yN\subseteq DN$. Notice that $\langle V^y \rangle=\langle V \rangle^y $ is a $p$-subgroup of $PN$. Since $P\in Syl_p(PN)$, there exists $x\in PN$ such that $V^y\subseteq P^x$. Since $D^x$ is strongly closed in $P^x$ and $V^x\subseteq D^x$, we see that $V^y\subseteq D^x$. Thus, $V^yN\subseteq D^xN$. Write $x=mn$ for $m\in P$ and $n\in N$. Note that $D^x=D^{mn}=D^n$ as $D$ is a normal set in $P$. It follows that $D^xN=D^nN=DN$. Consequently, $V^yN\subseteq DN$ as desired.
		\end{proof}
		
		Let $\mathcal L_p(G)$ be the set of all $p$-subgroups of $G$. A map $W:\mathcal L_p(G)\to \mathcal L_p(G)$ is called \textbf{a conjugacy functor} if the followings hold for each $U\in \mathcal L_p(G)$:
		\begin{enumerate}
			\item[(i)]\textit{ $W(U)\leq U$},
			\item[(ii)]  \textit{$W(U)\neq 1 $ unless $U=1$, and}
			\item[(iii)] \textit{$W(U)^g=W(U^g)$ for all $g\in G$.}
		\end{enumerate}
		
		\textbf{A section of $G$} is a quotient group $H/K$ where $K\unlhd H\leq G$. Let $\mathcal L_p^*(G)$ be the set of all sections of $G$ that are $p$-groups. A map $W:\mathcal L_p^*(G)\to \mathcal L_p^*(G)$ is called \textbf{a section conjugacy functor} if the followings hold for each $H/K \in \mathcal L_p^*(G)$:
		\begin{enumerate}
			\item[(i)]\textit{ $W(H/K)\leq H/K$},
			\item[(ii)]  \textit{$W(H/K)\neq 1 $ unless $H/K=1$, and}
			\item[(iii)] \textit{$W(H/K)^g=W(H^g/K^g)$ for all $g\in G$.}
			\item[(iv)] Suppose that $N\lhd H$, $N\leq K$ and $K/N$ is a $p'$-group. Let $P/N$ be a Sylow $p$-subgroup of $H/N$ and set $W(P/N)=L/N$. Then $W(H/K)=LK/K$.
		\end{enumerate}
		
		For more information about section conjugacy functors and their properties, we refer to \cite{Gla2}. Note that a sufficient condition for $(iii)$ and $(iv)$ is the following: whenever $Q,R \in \mathcal L_p^*(G)$ and $\phi:Q\to R$ is an isomorphism, $\phi (W(Q))=W(R).$ Thus, the operations like $ZJ_x,\Omega ZJ_x \textit{ and } J_x$ are section conjugacy functors for $x \in \{o,r,e\}$.

		\begin{lemman}\label{conjugacy functor}
			Let $P\in Syl_p(G)$ and $D$ be a strongly closed subset in $P$. Let $W:\mathcal{L}_p(G)\to \mathcal{L}_p(G)$ be a conjugacy functor. For each $p$-subgroup $U$ of $P$ define 
			
			$$W_D(U)=\begin{cases}
			W(\langle U\cap D \rangle) &  \ if \langle U\cap D \rangle\neq 1 \\ 
			W(U) & \ if \ \langle U\cap D \rangle=1 \\ \end{cases}$$
			and for all $V\in \mathcal{L}_p(G)$ and $x\in G$ such that $V^x\leq P$ define $W_D(V)=(W_D(V^x))^{x^{-1}}.$
			Then the map $W_D:\mathcal{L}_p(G)\to \mathcal{L}_p(G)$ is a conjugacy functor. Moreover for each $y\in G$, $W_D=W_{D^y}$.
		\end{lemman}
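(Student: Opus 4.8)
The plan is to reduce every assertion to the behaviour of $W_D$ on subgroups of $P$ itself, where the two-case definition is explicit, and to transport everything else by conjugation. The one substantive point — and the only place where strong closedness of $D$ enters — is the intertwining identity: if $U\le P$ and $y\in G$ satisfy $U^y\le P$, then $(U\cap D)^y = U^y\cap D$. One inclusion is immediate from $(U\cap D)^y\subseteq U^y$, together with the fact that $U\cap D\subseteq D$ and $(U\cap D)^y\subseteq U^y\le P$ force $(U\cap D)^y\subseteq D$ by strong closedness; the reverse inclusion is the same argument applied to $U^y\cap D\subseteq D$ with the conjugating element $y^{-1}$, using $(U^y\cap D)^{y^{-1}}\subseteq U\le P$. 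Since conjugation is a bijection this gives at once that $\langle U\cap D\rangle\neq 1$ if and only if $\langle U^y\cap D\rangle\neq 1$, and in that case $\langle U^y\cap D\rangle=\langle U\cap D\rangle^y$. Feeding this into axiom (iii) for $W$ in the first case, and using $W(U)^y=W(U^y)$ in the complementary case, yields $W_D(U)^y=W_D(U^y)$ whenever $U,U^y\le P$.

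Next I would record well-definedness of $W_D$ on $\mathcal L_p(G)$: if $V^x\le P$ and $V^{x'}\le P$, set $U=V^x$ and $y=x^{-1}x'$, so $U,U^y\le P$ and the previous step gives $(W_D(V^x))^{x^{-1}}=(W_D(V^{x'}))^{x'^{-1}}$; by Sylow every $p$-subgroup of $G$ is conjugate into $P$, so $W_D$ is defined everywhere. Axioms (i) and (ii) are then checked first for $U\le P$: if $\langle U\cap D\rangle\neq 1$ then $W_D(U)=W(\langle U\cap D\rangle)$ is contained in $\langle U\cap D\rangle\le U$ and is nontrivial because $W$ satisfies (ii) and $\langle U\cap D\rangle\neq 1$; if $\langle U\cap D\rangle=1$ then $W_D(U)=W(U)$ and (i), (ii) for $W_D$ are (i), (ii) for $W$. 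These transport to arbitrary $V$ by conjugation. For axiom (iii) in general, given $g\in G$ and $V$ choose $x$ with $V^x\le P$; then $(V^g)^{g^{-1}x}=V^x\le P$, so by the definition and well-definedness $W_D(V^g)=(W_D(V^x))^{x^{-1}g}=(W_D(V))^g$.

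For the last assertion $W_D=W_{D^y}$, I would first observe that $D^y$ is strongly closed in $P^y$ with respect to $G$ — a one-line translation of the definition — so that $W_{D^y}$ is meaningfully defined, relative to the Sylow subgroup $P^y$; the content of the claim is exactly that the resulting conjugacy functor on $\mathcal L_p(G)$ does not depend on the Sylow subgroup used to anchor it. Given $V\in\mathcal L_p(G)$, pick $x$ with $V^x\le P$, so $V^{xy}=(V^x)^y\le P^y$. Since conjugation by $y$ is an automorphism of $G$ one has $(V^x)^y\cap D^y=(V^x\cap D)^y$ with no appeal to strong closedness, hence $\langle(V^x)^y\cap D^y\rangle=\langle V^x\cap D\rangle^y$ and $W_{D^y}((V^x)^y)=W_D(V^x)^y$ in either case of the definition. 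Therefore $W_{D^y}(V)=(W_{D^y}(V^{xy}))^{(xy)^{-1}}=(W_D(V^x)^y)^{y^{-1}x^{-1}}=W_D(V^x)^{x^{-1}}=W_D(V)$.

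I do not expect a real obstacle: the argument is bookkeeping. The only step requiring genuine care is the intertwining identity $(U\cap D)^y=U^y\cap D$, where one must remember that $D$ is merely a subset (so $U\cap D$ is kept as a set and $\langle\,\cdot\,\rangle$ applied only at the very end) and that strong closedness has to be invoked twice, with $y$ and with $y^{-1}$, each time using that the relevant conjugate still lies inside $P$.
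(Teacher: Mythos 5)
Your proposal is correct and follows essentially the same route as the paper: the key identity $(U\cap D)^g=U^g\cap D$ via two applications of strong closedness, then well-definedness by setting $U=V^x$, $g=x^{-1}x'$, then axiom (iii) in general by choosing a conjugate into $P$. The only difference is that you spell out the final claim $W_D=W_{D^y}$ (which the paper dismisses as routine), and your verification of it is accurate.
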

		\begin{proof}[\textbf{Proof}]
			Since $W$ is a conjugacy functor,	it is easy to see that $W_D(U)\leq U$  and $W_D(U)\neq 1$ unless $U=1$ for each $U\in \mathcal{L}_p(G)$ by our settings.
			
			Now we need to show that $W_D(U)^g=W_D(U^g)$ for all $g\in G$ and $U\in \mathcal{L}_p(G)$, and indeed $W_D$ is well defined. First suppose that $U,U^g\leq P$ for some $g\in G$. We first show that $W_D(U)^g=W_D(U^g)$for this special case.  Note that 
			$(U\cap D)^g\subseteq U^g\leq P$, and so $(U\cap D)^g\subseteq U^g\cap D$ as $D$ is strongly closed in $P$. On the other hand, $(U^g\cap D)^{g^{-1}}\subseteq U \leq P$, and so $(U^g\cap D)^{g^{-1}}\subseteq U\cap D$ as $D$ is strongly closed in $P$. By showing the  reverse inequality, we obtain that $(U\cap D)^g= U^g\cap D.$ Now if $\langle U\cap D \rangle=1$ then $\langle U^g\cap D \rangle=1$ and  $W_D(U)^g=W(U)^g=W(U^g)=W_D(U^g)$. The second equality holds as $W$ is a conjugacy functor. On the other hand, we get $W_D(U)^g=W(\langle U\cap D \rangle)^g=W(\langle U\cap D \rangle^g)=W(\langle U^g\cap D \rangle )=W_D(U^g)$ when $\langle U\cap D \rangle \neq 1.$ 
			
			Now let $V\in \mathcal{L}_p(G)$ and $x,y\in G$ such that $V^x,V^y\leq P$. Then by setting $U=V^x$ and $g=x^{-1}y$, we have $U^g=V^y$ and $W_D(U)^g=W_D(U^g)$ by the previous paragraph. It follows that $W_D(V^y)=W_D(V^x)^{x^{-1}y}$. Then $W_D(V^y)^{y^{-1}}=W_D(V^x)^{x^{-1}}$, and so $W_D$ is well defined. Now let $z\in G$. Then $W_D(V^z)=W_D(V^x)^{x^{-1}z}=(W_D(V^x)^{x^{-1}})^z=W_D(V)^z$, which completes the proof of first part.
			
			 Lastly, since $D^y$ is strongly closed in $P^y$, $W_{D^y}$ is a conjugacy functor for $y\in G$ by the first part. It is routine to check that they are indeed the same function. 
		\end{proof}
	\begin{remark}\label{emtpty remark}
		Although a strongly closed set is nonempty according to Definition \ref{strongly closed set}, if we take $D=\emptyset$ in the previous lemma, we get $W_{\emptyset}(U)=W(U)$. Thus, we set $W_{\emptyset}=W$ for any conjugacy functor $W$.
	\end{remark}

		\begin{lemman}\label{section conjugacy functor}
			Let $P\in Syl_p(G)$ and $D$ be a strongly closed subset in $P$. Let $K\unlhd H\leq G$, $N\lhd G$ and $g\in G$ such that $P^g\cap H\in Syl_p(H)$.  Let $W:\mathcal{L}^*_p(G)\to \mathcal{L}^*_p(G)$ be a section conjugacy functor. Then the followings hold:	\begin{enumerate}[label=(\alph*)]
				\item\textit{ $W_{D^g\cap H}:\mathcal{L}_p(H)\to \mathcal{L}_p(H)$ is a conjugacy functor.}
				\item \textit{$W_{DN/N}:\mathcal{L}_p(G/N)\to \mathcal{L}_p(G/N)$ is a conjugacy functor.}
				\item \textit{$W_{(D^g\cap H)K/K}:\mathcal{L}_p(H/K)\to \mathcal{L}_p(H/K)$ is a conjugacy functor.} 
			\end{enumerate}
		\end{lemman}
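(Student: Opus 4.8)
\emph{Plan.} The idea is to deduce each of (a), (b), (c) from Lemma \ref{conjugacy functor} by verifying its two hypotheses in the appropriate ambient group: that $W$ restricts there to an honest conjugacy functor, and that the relevant subset is strongly closed in a Sylow $p$-subgroup there, the latter being exactly the content of Lemma \ref{strongly closed2}. So the first step is the routine observation that a section conjugacy functor restricts to subgroups and passes to quotients. If $L\le G$, then every $U\in\mathcal{L}_p(L)$ is the section $U/1$ of $G$, so $W(U)\le U$ is a $p$-subgroup of $L$, it is nontrivial unless $U=1$, and $W(U)^\ell=W(U^\ell)$ for $\ell\in L$ by property (iii) of a section conjugacy functor (applied to $U/1$, using $1^\ell=1$); hence $U\mapsto W(U)$ is a conjugacy functor on $L$. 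The same computation, applied to sections of the form $U/N$ with $N\lhd G$ (so that $N^g=N$), shows $U/N\mapsto W(U/N)$ is a conjugacy functor on $G/N$, and likewise $U/K\mapsto W(U/K)$ is a conjugacy functor on $H/K$ because $K\lhd H$ gives $K^h=K$ for $h\in H$.

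For (a): by Lemma \ref{strongly closed2}(a), $D^g\cap H$ is strongly closed in $P^g\cap H\in Syl_p(H)$ with respect to $H$ whenever it is nonempty, so applying the construction of Lemma \ref{conjugacy functor} inside $H$ — to the conjugacy functor $W|_{\mathcal{L}_p(H)}$, the Sylow subgroup $P^g\cap H$, and the strongly closed set $D^g\cap H$ — yields that $W_{D^g\cap H}$ is a conjugacy functor on $H$; if $D^g\cap H=\emptyset$ then $W_{D^g\cap H}=W$ by Remark \ref{emtpty remark}, which already is a conjugacy functor on $H$. Part (b) is the same argument with $G/N$, $PN/N\in Syl_p(G/N)$, and the strongly closed set $DN/N$ (furnished by Lemma \ref{strongly closed2}(b)) in place of $H$, $P^g\cap H$, $D^g\cap H$.

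For (c) I would chain the two halves of Lemma \ref{strongly closed2}: by (a) of that lemma, $D^g\cap H$ is strongly closed in $P^g\cap H\in Syl_p(H)$ with respect to $H$, and then applying (b) of that lemma inside $H$, with the normal subgroup $K\lhd H$, shows $(D^g\cap H)K/K$ is strongly closed in $(P^g\cap H)K/K\in Syl_p(H/K)$ with respect to $H/K$. Now Lemma \ref{conjugacy functor}, applied inside $H/K$ to the conjugacy functor $W|_{\mathcal{L}_p(H/K)}$, the Sylow subgroup $(P^g\cap H)K/K$, and the set $(D^g\cap H)K/K$, gives that $W_{(D^g\cap H)K/K}$ is a conjugacy functor on $H/K$; the degenerate case in which $D^g\cap H$, hence its image in $H/K$, is empty is again covered by the convention $W_\emptyset=W$ of Remark \ref{emtpty remark}.

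I do not expect a genuine obstacle here: once the restriction and quotient behaviour of section conjugacy functors is written out, all three parts are instances of Lemma \ref{conjugacy functor} fed by Lemma \ref{strongly closed2}. The only points that demand care are the bookkeeping of which group, Sylow subgroup, and strongly closed subset to substitute into those two lemmas — in particular that (c) must pass first to $H$ and then to $H/K$, invoking both halves of Lemma \ref{strongly closed2} in that order — and remembering the empty-set convention so that the statements hold without an explicit nonemptiness hypothesis.
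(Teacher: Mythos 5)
Your proposal is correct and follows essentially the same route as the paper: restrict the section conjugacy functor to $H$, $G/N$, or $H/K$ to get a conjugacy functor there, feed the appropriate strongly closed set from Lemma \ref{strongly closed2} into Lemma \ref{conjugacy functor}, and invoke Remark \ref{emtpty remark} for the empty case. The only difference is that you spell out the details the paper leaves as ``similar'' (the restriction/quotient behaviour of $W$ and the chaining of Lemma \ref{strongly closed2}(a) then (b) for part (c)), which is exactly what the paper's terse proof intends.
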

		\begin{proof}[\textbf{Proof}]
			$(a)$ By taking the restrictions of $W$ to the section $H/1$, we obtain a  conjugacy functor $W:\mathcal{L}_p(H)\to \mathcal{L}_p(H)$. By Lemma \ref{strongly closed2} (a), $ D^g\cap H$ is  strongly closed in $H\cap P^g$ with respect to $H$ if $D^g\cap H$ is nonempty. Then the result follows from Lemma \ref{conjugacy functor} and Remark \ref{emtpty remark}. Similarly, $(b)$ follows by Lemma \ref{strongly closed2} (b) and Lemma \ref{conjugacy functor}. Part $(c)$ also follows in a similar fashion.
		\end{proof}
	
	\begin{remark}\label{resrection}
		 It should be noted that we only need $W$ be to be a conjugacy functor to establish  Lemma \ref{section conjugacy functor} (a).
	Now	assume the hypotheses and notation of Lemma \ref{section conjugacy functor}. Let $U\in \mathcal L_p(H)$. Then it is easy to see that $W_{D^g}(U)=W_{D^g\cap H}(U)$ by their definitions, and so $W_D(U)=W_{D^g\cap H}(U)$ by Lemma \ref{conjugacy functor}. Thus, the map $W_{D^g\cap H}$ is equal to the restriction of $W_D$ to $\mathcal L_p(H)$.
	\end{remark}
		
		\begin{lemman}\label{final}
			Assume the hypothesis and notation of Lemma \ref{section conjugacy functor}. We define $W^*_D:\mathcal{L}^*_p(G)\to \mathcal{L}^*_p(G)$ by setting $W^*_D(H/K)=W_{(D^g\cap H)K/K}(H/K)$ for each $H/K\in \mathcal{L}^*_p(G)$. Then
			$$W^*_D(H/K)=\begin{cases}
			
			W(\langle D^g\cap H \rangle K/ K) & \ if \ D^g\cap H\nsubseteq K. \\
			W(H/K) &  \ if \ D^g\cap H\subseteq K. \\

		\end{cases} $$
		Moreover, $W^*_D$ is a section conjugacy functor.
		\end{lemman}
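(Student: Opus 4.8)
The plan is to obtain the displayed case-formula directly from the definition of $W_{(D^g\cap H)K/K}$ in Lemma~\ref{conjugacy functor}, and then to verify the four defining properties of a section conjugacy functor, only the last of which requires real work. To get the formula, fix $H/K\in\mathcal{L}^*_p(G)$ together with a $g$ for which $P^g\cap H\in Syl_p(H)$. Since $H/K$ is a $p$-group, it is its own (unique) Sylow $p$-subgroup, so $(P^g\cap H)K/K=H/K$, and $(D^g\cap H)K/K$ is a strongly closed subset of $H/K$ with respect to $H/K$ --- this is exactly what makes $W_{(D^g\cap H)K/K}$ a conjugacy functor on $\mathcal{L}_p(H/K)$ in Lemma~\ref{section conjugacy functor}(c), and in particular $(D^g\cap H)K/K$ is a normal subset of the $p$-group $H/K$. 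Evaluating that conjugacy functor at the group $H/K$ via the defining formula of Lemma~\ref{conjugacy functor} gives $W(\langle (H/K)\cap (D^g\cap H)K/K\rangle)=W(\langle D^g\cap H\rangle K/K)$ when this subgroup is nontrivial and $W(H/K)$ otherwise, while $\langle D^g\cap H\rangle K/K\neq 1$ precisely when $D^g\cap H\nsubseteq K$; the case $D^g\cap H=\emptyset$ is subsumed under $D^g\cap H\subseteq K$ using the convention $W_\emptyset=W$ of Remark~\ref{emtpty remark}. This is the asserted formula.

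Before proceeding I would check that the value does not depend on the choice of $g$: if $g_1,g_2$ are both valid for $H$, then $P^{g_1}\cap H$ and $P^{g_2}\cap H$ are conjugate by some $h\in H$, a short computation using that $D$ is strongly closed in $P$ yields $(D^{g_1}\cap H)^h=D^{g_2}\cap H$, and since $(D^{g_i}\cap H)K/K$ are normal subsets of $H/K$ that are $(H/K)$-conjugate, they coincide. Properties (i) and (ii) of a section conjugacy functor are then immediate, either from the fact that $W_{(D^g\cap H)K/K}$ is a conjugacy functor on $\mathcal{L}_p(H/K)$ evaluated at $U=H/K$, or from the formula together with the corresponding properties of $W$ applied to $\langle D^g\cap H\rangle K/K$ or $H/K$. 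For (iii), if $g$ is valid for $H$ then $gx$ is valid for $H^x$ and $D^{gx}\cap H^x=(D^g\cap H)^x$, so $(D^{gx}\cap H^x)K^x/K^x=((D^g\cap H)K/K)^x$; the formula together with property (iii) of $W$ then gives $W^*_D(H^x/K^x)=W^*_D(H/K)^x$, verified case by case.

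The substantive step is property (iv). Given $M\lhd H$ with $M\leq K$, $K/M$ a $p'$-group, $R/M\in Syl_p(H/M)$ and $W^*_D(R/M)=L/M$, I would first note that $R$ contains a Sylow $p$-subgroup of $H$, so I may choose a valid $g$ for $H$ with $P^g\cap H\leq R$; then $P^g\cap H\in Syl_p(R)$ as well and $D^g\cap R=D^g\cap H=:T$. The key point is the dichotomy that, because $T$ consists of $p$-elements while $K/M$ is a $p'$-group, $T\subseteq K$ if and only if $T\subseteq M$. If $T\subseteq M$, both $W^*_D(H/K)$ and $W^*_D(R/M)$ collapse to the plain values $W(H/K)$ and $W(R/M)=L/M$, and the identity $W(H/K)=LK/K$ is precisely property (iv) for $W$. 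If $T\nsubseteq K$, put $Q=\langle T\rangle\leq P^g\cap H\leq R$, so that $W^*_D(H/K)=W(QK/K)$ and $W^*_D(R/M)=W(QM/M)=L/M$; one checks $QM\cap K=M$ (the subgroup $(QM\cap K)/M$ of $K/M$ is a $p'$-group contained in the $p$-group $QM/M$), hence $QM/M$ is a complement to the normal $p'$-subgroup $K/M$ of $QK/M$, i.e.\ $QM/M\in Syl_p(QK/M)$, and applying property (iv) of $W$ to the section $QK/K$ with $M$ in the role of the small normal subgroup gives $W(QK/K)=LK/K$, as wanted. I expect property (iv) --- spotting the $p$-element versus $p'$-group dichotomy and building the auxiliary section $QK/K$ to feed into property (iv) of $W$ --- to be the main obstacle; once the formula is in hand, the rest is essentially bookkeeping.
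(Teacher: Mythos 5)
Your proof is correct and follows essentially the same route as the paper: the case formula falls out of the definition of $W_{(D^g\cap H)K/K}$ evaluated at the $p$-group $H/K$, and for axiom (iv) you use the same $p$-element versus $p'$-group dichotomy ($T\subseteq K\Leftrightarrow T\subseteq M$) and feed the auxiliary section $\langle D^g\cap H\rangle K/K$ with Sylow $\langle D^g\cap H\rangle M/M$ into axiom (iv) of $W$, exactly as the paper does with $H^*=\langle D^g\cap H\rangle K$ and $P^*=\langle D^g\cap H\rangle N$. The only difference is cosmetic: where the paper conjugates the given Sylow $X/N$ by an element of $H$ so that it contains $(D^g\cap H)N/N$ and transfers back via axiom (iii), you instead re-choose $g$ so that $P^g\cap H$ lies in the preimage of that Sylow, which is justified by your independence-of-$g$ check (a point the paper leaves implicit).
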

		
		\begin{proof}[\textbf{Proof}]
			Firs suppose that $D^g\cap H\subseteq K$. Then $H/K\cap (D^g\cap H)K/K=K/K$, and so $W_{(D^g\cap H)K/K}(H/K)=W(H/K)$. If $D^g\cap H\nsubseteq K$ then $H/K\cap (D^g\cap H)K/K\neq K/K$, and so $W_{(D^g\cap H)K/K}(H/K)=W(\langle D^g\cap H \rangle K/ K)$ by its definition, which shows the first part.
			
			Note that $W^*_D(H/K)\leq H/K$ and $W^*_D(H/K)\neq 1$ unless $H/K=1$ by Lemma \ref{section conjugacy functor}(c). Now, we need to show that $(iii)$ and $(iv)$ in the definition of a section conjugacy functor hold.
			
			Pick $x\in G$. Since $(D^g\cap H)K/K$ is a  strongly subset in $(P^g\cap H)K/K$,  $(D^g\cap H)^xK^x/K^x$ is a strongly closed subset in $(P^g\cap H)^xK^x/K^x$. Moreover, $D^g\cap H\subseteq K$ if and only if $D^{gx}\cap H^x\subseteq K^x$. Thus, if $W^*_D(H/K)=W(H/K)$, then $W^*_D(H^x/K^x)=W(H^x/K^x)$. It follows that $$W^*_D(H^x/K^x)=W(H^x/K^x)=W(H/K)^x=W^*_D(H/K)^x.$$
			The second equality holds as $W$ is a section conjugacy functor.
			
			Now if $W^*_D(H/K)=W(\langle D^g\cap H \rangle K/ K)$ then $$W^*_D(H^x/K^x))=W(\langle D^{gx}\cap H^x \rangle K^x/ K^x)=W((\langle D^g\cap H \rangle K/ K)^x)=W^*_D(H/K)^x.$$
			The last equality holds as $W$ is a section conjugacy functor. Thus we see that $(iii)$ is satisfied.
			
			Now let $N\lhd H$ such that $N\leq K$ and $K/N$ is a $p'$-group. Let $X/N$ be a Sylow $p$-subgroup of $H/N$. We need to show that if $W^*_D(X/N)=L/N$ then $W^*_D(H/K)=LK/K$. Now pick $h\in H$ such that $(X/N)^h\supseteq (D^g\cap H)N/N$. By part $(iii)$, we have $W^*_D(X/N)^h=L^h/N^h=L^h/N$. If we could show that $W^*_D(H/K)=L^hK/K$, we can conclude that $$W^*_D(H/K)=W^*_D(H/K)^{h^{-1}}=(L^hK/K)^{h^{-1}}=LK/L$$ by part $(iii)$. Thus, we see that it is enough to show the claim for $(X/N)^h$, and so we may simply assume that $(D^g\cap H)N/N \subseteq X/N$.
			
			Clearly $\langle D^g\cap H\rangle$ is a $p$-group. Since $K/N$ is a $p'$-group, we see that $D^g\cap H\subseteq K$ if and only if $D^g\cap H\subseteq N$. Thus, if $W^*_D(H/K)=W(H/K)$ then $W^*_D(X/N)=W(X/N)$. It follows that $W^*_D(H/K)=LK/K$ as $W$ is a section conjugacy functor. 
			
			Assume that $D^g\cap H\nsubseteq K$. Then $W^*_D(H/K)=W(\langle D^g\cap H \rangle K/ K)$ and $W^*_D(X/N)=W(\langle D^g\cap H \rangle N/ N)=L/N$. Now write $H^*=\langle D^g\cap H\rangle K$ and $P^*=\langle D^g\cap H\rangle N$. Observe that $P^*/N\in Syl_p(H^*/N)$ and recall  $K/N$ is a $p'$-group. Since $W$ is a section conjugacy functor and  $W(P^*/N)=L/N$, we get $W(H^*/K)=LK/K$. Then the result follows.
		\end{proof}

	\begin{proof}[\textbf{Proof of Theorem \ref{E}}]
	Let $p$ be an odd prime, $G$ be a $p$-stable group and $P\in Syl_p(G)$. Suppose that $D$ is a strongly closed subgroup in $P$. Let $H$ be a $p$-constrained subgroup of $G$ and $g\in G$ such that $P^g\cap H\in Syl_p(H)$. Since each $p$-subgroup of $H$ is also a $p$-subgroup of $G$, we see that $H$ is also a $p$-stable group. 
	
	Now let $W\in \{ZJ_o, \Omega ZJ_e, \Omega ZJ_r \}$. It follows that $W_{D^g\cap H}$ is a conjugacy functor by Lemma \ref{section conjugacy functor}(a). Note that $W_{D^g\cap H}(P^g\cap H)\in \{W(D^g\cap H), W(P^g\cap H)\}$, and so $N_H(W_{D^g\cap H}(P^g\cap H))$ controls strong $H$-fusion in $P^g\cap H$ by Theorem \ref{main thm2} in both cases. Note also that $W_{D^g\cap H}(P^g\cap H)=W_D(P^g\cap H)$ by Remark \ref{resrection}.
	
	Now assume that $N_G(U)$ is $p$-constrained for each nontrivial subgroup $U$ of $P$. Fix $U\leq P$ and let $S\in Syl_p(N_G(U))$. Then by the arguments in the first paragraph, we see that the normalizer of $W_D(S)$ in $N_G(U)$ controls strong $N_G(U)$-fusion in $S$, and so we obtain that $N_G(W_D(P))$ control strong $G$-fusion in $P$ by \cite[Theorem 5.5(i)]{Gla2}. It follows that the normalizers the of the subgroups  $Z(J_o(D))$, $\Omega(Z(J_r(D)))$ and $\Omega(Z(J_e(D)))$ control strong $G$-fusion in $P$. 
	
\end{proof}
		
		\begin{lemman}\label{ff}
			Let $p$ be an odd prime, $G$ be a group, and $P\in Syl_p(G)$. Suppose that $D$ is a  strongly closed subgroup in $P$. Let $G^*$ be a section of $G$ such that $G^*$ is $p$-stable and  $C_{G^*}(O_p(G^*))\leq O_p(G^*)$. If $S\in Syl_p(G^*)$, then $W^*_D(S)\lhd G^*$ for each $W\in \{ZJ_o, \Omega ZJ_e, \Omega ZJ_r \}$. 
		\end{lemman}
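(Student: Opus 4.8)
The plan is to compute $W^*_D(S)$ explicitly from the formula in Lemma~\ref{final}, recognize the outcome as the value of $W$ at a strongly closed subgroup of $S$, and then invoke Theorem~\ref{B} inside $G^*$. Write $G^*=H/K$ with $K\unlhd H\leq G$, and recall from Lemma~\ref{section conjugacy functor} that a presentation of a $p$-section together with a suitable $g\in G$ (with $P^g\cap H\in Syl_p(H)$) is part of the data for evaluating $W^*_D$.

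First I would fix compatible data. Every Sylow $p$-subgroup of $H/K$ has the form $QK/K$ with $Q\in Syl_p(H)$, so I would pick $Q\in Syl_p(H)$ with $QK/K=S$ and then $g\in G$ with $P^g\cap H=Q$ (such a $g$ exists because $P^g\cap H$ runs over all of $Syl_p(H)$ as $g$ varies over $G$). Since $D\leq P$, the intersection $D_Q:=D^g\cap H$ is a subgroup contained in $P^g\cap H=Q$, and a short check gives $D^g\cap QK=D_Q$ and $P^g\cap QK=Q\in Syl_p(QK)$; hence the presentation $S=QK/K$ together with this same $g$ is admissible in Lemma~\ref{final}, which then yields that $W^*_D(S)$ equals $W(\langle D_Q\rangle K/K)$ when $D_Q\nsubseteq K$ and equals $W(QK/K)$ when $D_Q\subseteq K$.

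Next I would produce the relevant strongly closed subgroup. By Lemma~\ref{strongly closed2}(a), $D_Q=D^g\cap H$ is strongly closed in $Q=P^g\cap H$ with respect to $H$; applying Lemma~\ref{strongly closed2}(b) with $H,Q,D_Q,K$ in the roles of $G,P,D,N$ then shows that $D_QK/K$ is a strongly closed subgroup of $QK/K=S$ with respect to $H/K=G^*$ (and, being a subgroup, $\langle D_Q\rangle K/K=D_QK/K$). Since $S$ is trivially strongly closed in itself, in either branch above we have $W^*_D(S)=W(E)$ for a strongly closed subgroup $E$ of $S$ in $G^*$, namely $E=D_QK/K$ in the first branch and $E=S$ in the second.

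To finish, $G^*$ is $p$-stable with $C_{G^*}(O_p(G^*))\leq O_p(G^*)$ by hypothesis and $S\in Syl_p(G^*)$, so Theorem~\ref{B} applied to $G^*$ and the strongly closed subgroup $E$ gives that $Z(J_o(E))$, $\Omega(Z(J_r(E)))$ and $\Omega(Z(J_e(E)))$ are normal in $G^*$; equivalently $W(E)\lhd G^*$ for every $W\in\{ZJ_o,\Omega ZJ_e,\Omega ZJ_r\}$, and therefore $W^*_D(S)\lhd G^*$. The only real obstacle I anticipate is the bookkeeping in the first two steps: choosing $Q$, $g$ and the presentation $S=QK/K$ compatibly so that Lemma~\ref{final} applies with the clean identity $D^g\cap QK=D^g\cap H$, and then correctly chaining the two parts of Lemma~\ref{strongly closed2} to push strong closure from $P$ down to $G^*$. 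Once $W^*_D(S)$ has been identified as $W$ of a strongly closed subgroup of a Sylow $p$-subgroup of $G^*$, Theorem~\ref{B} closes the argument at once.
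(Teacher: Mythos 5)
Your proposal is correct and takes essentially the same route as the paper's own proof: both evaluate $W^*_D(S)$ via the explicit two-case formula of Lemma \ref{final}, identify the result as $W$ applied to a strongly closed subgroup of $S$ in $G^*$ (either $(D^g\cap H)K/K$ or $S$ itself), and conclude by Theorem \ref{B}. Your extra bookkeeping — choosing $Q$ and $g$ compatibly and chaining Lemma \ref{strongly closed2}(a) and (b) to push strong closure down to the section — only makes explicit what the paper leaves implicit.
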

		\begin{proof}[\textbf{Proof}]
			Note that $D$ is also a strongly closed set in $P$. We assume the notation of Lemma \ref{final}. Let $W\in \{ZJ_o, \Omega ZJ_e, \Omega ZJ_r \}$. Then clearly $W$ is a section conjugacy functor. It follows that $W^*_D:\mathcal L^*_p(G)\to \mathcal L^*_p(G)$ is a section conjugacy functor by Lemma \ref{final}. Let $G^*=X/K$ be a section of $G$ such that $$C_{G^*}(O_p(G^*))\leq O_p(G^*).$$
			
			Let $H/K\in Syl_p(G^*)$. Then we see that $W^*_D(H/K)=W(H/K)$ if $D^g\cap H\subseteq K$. In this case, $W(H/K)=Z(J_o(H/K)), \ \Omega (Z(J_e(H/K)))$ or $\Omega (Z(J_r(H/K))) $  which are normal subgroups of $G^*$ by Theorem \ref{B}. If $D^g\cap H\nsubseteq K$ then $(D^g\cap H)K/K$ is a strongly closed subgroup in $H/K$ with respect to $G^*$. Write $D^*=( D^g\cap H) K/K$ , then $$W^*_D(H/K)=W(D^*)=Z(J_o(D^*)), \ \Omega (Z(J_e(D^*))),  \ or \ \Omega (Z(J_r(D^*))) $$ which are normal subgroups of $G^*$ by Theorem \ref{B}. Thus we see that $W_D^*(H/K)\unlhd G^*$ for all cases.
		\end{proof}
		
		Now we are ready to prove Theorems \ref{F} and \ref{H}.
		\begin{proof}[\textbf{Proof of Theorem \ref{F}} ]
			Let $p$ be an odd prime, $G$ be a $\Qdp$-free group, and $P\in Syl_p(G)$ as in our hypothesis.
			 Since $G$ does not involve a section isomorphic to  $\Qdp$, every section of $G$ is $p$-stable by \cite[Proposition 14.7]{Gla2}. Now  let $W\in \{ZJ_o, \Omega ZJ_e, \Omega ZJ_r \}$.  Then we have that $W^*_D:\mathcal L^*_p(G)\to \mathcal L^*_p(G)$ is a section conjugacy functor by Lemma \ref{final}. Let $G^*$ be a section of $G$ such that $C_{G^*}(O_p(G^*))\leq O_p(G^*)$ and let $S\in Syl_p(G^*)$. Then we see that $W^*_D(S)\lhd G^*$ by Lemma \ref{ff}.
			 It follows that $N_G(W^*_D(P))$ controls strong $G$-fusion in $P$ by \cite[Theorem 6.6]{Gla2}. We see that $W^*_D(P)=Z(J_o(D)), \  \Omega (Z(J_e(D))),  \ or \ \Omega (Z(J_r(D)))$ according to choice of $W$, which completes the proof.
		\end{proof}
		
		\begin{proof}[\textbf{Proof of Theorem \ref{H}}]
			 Let $W\in \{ZJ_o, \Omega ZJ_e, \Omega ZJ_r \}$. Then  $W^*_D:\mathcal L^*_p(G)\to \mathcal L^*_p(G)$ is a section conjugacy functor by Lemma \ref{final}. Let $G^*$ be a section of $G$ such that $C_{G^*}(O_p(G^*))\leq O_p(G^*)$ and $G^*/(O_p(G^*)$ is $p$-nilpotent. Suppose also that $S^*\in Syl_p(G^*) $ is a maximal subgroup of $G^*$. Let $H$ be the normal Hall $p'$-subgroup of $G^*/O_p(G^*)$. Write $S=S^*/O_p(G^*)$. Then $S$ is also maximal in $G^*/(O_p(G^*)$ and $S$ acts on $H$ via coprime automorphisms. If $1<U\leq H$ is $S$-invariant then $SU=G^*/(O_p(G^*)$ by the maximality of  $S$.   Since $SH=G^*/(O_p(G^*)$ and $S\cap H=1$, we see that $U=H$. Thus, there is no proper nontrivial $S$-invariant subgroup of $H$. On the other hand, we may choose an $S$-invariant Sylow subgroup of $H$ by \cite[Theorem 3.23(a)]{Isc}. This forces $H$ to be a $q$-group for some prime $q$, and so $H'<H$. It follows that $H$ is abelian due to the fact that $H'$ is $S$-invariant.
			
			Let $H^*$ be a Hall $p'$-subgroup of $G^*$. Then we see that $H^*O_p(G^*)/(O_p(G^*)\cong H^*$. Thus, we observe that Hall $p'$-subgroups of $G^*$ are also abelian. Since $p$ is odd, we see that a Sylow $2$-subgroup of $G^*$ is abelian. This yields that $G^*$ does not involve a section isomorphic to $SL(2,p)$, and so every section of $G^*$ is $p$-stable  by \cite[Proposition 14.7]{Gla2}. Then we obtain that $W^*_D(S^*)\lhd G^*$ by Lemma \ref{ff}.
			It follows that $G$ is $p$-nilpotent by \cite[Theorem 8.7]{Gla2}.
		\end{proof}

	\section*{Acknowledgements}
	I would like to thank Prof. George Glauberman for encouraging me to study on this topic.

\end{document}